\newtheorem{thm}{Theorem}[section]
\newtheorem{prop}[thm]{Proposition}
\newtheorem{lemma}[thm]{Lemma}
\theoremstyle{definition}
\newtheorem{defi}[thm]{Definition}
\newtheorem{rem}[thm]{Remark}
\newtheorem{ex}[thm]{Example}
\newcommand{\N}{\mathbb{N}}
\newcommand{\Z}{\mathbb{Z}}
\newcommand{\R}{\mathbb{R}}
\newcommand{\C}{\mathbb{C}}
\newcommand{\T}{\mathbb{T}}
\newcommand{\st}{\;:\;}
\newcommand{\definizione}{\;\stackrel{\text{def}}{=}\;}
\newcommand{\sspace}{\cdot}
\newcommand{\ssspace}{\cdot}
\newcommand{\opiccolo}[1]{\mathrm{o}\left[#1\right]}
\DeclareMathOperator{\im}{i}
\DeclareMathOperator{\de}{d}
\DeclareMathOperator{\id}{id}
\DeclareMathOperator{\interno}{int}
\newcommand{\Cpf}{$\mathcal{C}^\infty$-pure-and-full}
\newcommand{\Cf}{$\mathcal{C}^\infty$-full}
\newcommand{\Cp}{$\mathcal{C}^\infty$-pure}
\newcommand{\pf}{pure-and-full}
\newcommand{\p}{pure}
\newcommand{\del}{\partial}
\newcommand{\delbar}{\overline{\del}}
\title[On the cohomology of almost-complex manifolds]
{On the cohomology of almost-complex manifolds}
\author{Daniele Angella}
\address[Daniele Angella]{Dipartimento di Matematica ``Leonida Tonelli''\\
Universit\`{a} di Pisa \\
Largo Bruno Pontecorvo 5, 56127\\
Pisa, Italy}
\email{angella@mail.dm.unipi.it}
\author{Adriano Tomassini}
\address[Adriano Tomassini]{Dipartimento di Matematica\\
Universit\`{a} di Parma \\
Parco Area delle Scienze 53/A, 43124 \\
Parma, Italy}
\email{adriano.tomassini@unipr.it}
\keywords{pure and full almost complex structure; cohomology; deformation}
\thanks{This work was supported by the Project MIUR ``Geometric Properties of Real and Complex Manifolds'' and by GNSAGA
of INdAM}
\subjclass[2000]{53C55; 53C25; 32G05}
\begin{document}

\vspace{-2cm}
\begin{minipage}[l]{10cm}
{\sffamily
  D. Angella, A. Tomassini, On the cohomology of almost-complex manifolds,
  {\em Int. J. Math.} \textbf{23} (2012), no.~2, 1250019, 25 pp.,
  \textsc{doi:} \texttt{10.1142/S0129167X11007604}.

\medskip

  \begin{flushright}\begin{footnotesize}
  (Electronic version of an article published as {\em Int. J. Math.}, \textbf{23}, no.~2, 2012, 1250019, 25 pp., \textsc{doi:} \texttt{10.1142/S0129167X11007604}\\
  \textcopyright\ [copyright World Scientific Publishing Company] \url{http://www.worldscientific.com/worldscinet/ijm}.)
  \end{footnotesize}\end{flushright}
}
\end{minipage}
\vspace{2cm}

\begin{abstract}
 Following T.-J. Li, W. Zhang \cite{li-zhang}, we continue to study the link between the cohomology of an almost-complex manifold and its almost-complex structure. In particular, we apply the same argument in \cite{li-zhang} and the results obtained by D. Sullivan in \cite{sullivan} to study the cone of semi-K\"ahler structures on a compact semi-K\"ahler manifold.
\end{abstract}

\maketitle
\tableofcontents
\section*{Introduction}
In studying differentiable manifolds and their structures, the de Rham and Dolbeault cohomologies provide an important tool. On a compact complex manifold $M$, the Hodge and Fr\"olicher spectral sequence relates the latter to the former, as follows
$$ E_{1}^{p,q}\simeq H^{p,q}_{\delbar}(M) \Rightarrow H^{p+q}_{dR}(M;\C) \;.$$
On a compact K\"ahler manifold as well as on a compact complex surface, this spectral sequence degenerates at the first level and the trivial filtration on the space of differential forms induces a weight $2$ formal Hodge decomposition in cohomology: that is, the de Rham cohomology admits a decomposition through the Dolbeault cohomology; in other words, the complex structure yields a decomposition not only at the level of differential forms but also in cohomology. T.-J. Li and W. Zhang studied in \cite{li-zhang} the class of the almost-complex manifolds for which a similar decomposition holds, the {\em \Cpf\ manifolds} (see \S\ref{sec:cpf} for the precise definitions). Obviously, compact K\"ahler manifolds and compact complex surfaces are \Cpf; moreover, T. Dr\v{a}ghici, T.-J. Li and W. Zhang proved in \cite{draghici-li-zhang} that every $4$-dimensional compact almost-complex manifold is \Cpf.

In \cite{li-zhang}, the notion of \Cpf\ almost-complex structures arises in the study of the symplectic cones of an almost-complex manifold: more precisely, \cite[Proposition 3.1]{li-zhang} (which we quote in Theorem \ref{thm:coni-li-zhang}) proves that, if $J$ is an almost-complex structure on a compact almost-K\"ahler manifold such that every cohomology class can be written as a sum of a $J$-invariant class and a $J$-anti-invariant class (we will call such a $J$ a \Cf\ structure, see \S\ref{sec:cpf} for the precise definition), then the class of each symplectic form which is positive on the $J$-lines is the sum of the classes of an almost-K\"ahler form and of a $J$-anti-invariant one.

Other properties concerning \Cpf\ manifolds and their dual equivalents, namely the \pf\ manifolds, were studied by A. Fino and the second author in \cite{fino-tomassini}. For example, they proved that, given a compact almost-K\"ahler manifold whose symplectic form satisfies the Hard Lefschetz Condition, henceforth denoted \textsc{hlc} (that is, the symplectic form and all its powers induce isomorphisms in cohomology, see \S\ref{sec:Cpf-and-pf} for the precise definition), if it is \Cpf\ then it is also \pf; in other words, the \textsc{hlc} provides a tool for the duality between the notions of a \Cpf\ and a \pf\ structure. They also showed that the notion of \Cpf ness is not trivial in dimension greater than $4$: an explicit example of a $6$-dimensional non-\Cp\ almost-complex manifold is in \cite[Example 3.3]{fino-tomassini} (actually, it is not \Cf, too, see Example \ref{ex:fino-tomassini}); more examples of non-\Cpf\ complex structures arise from deformations of the Iwasawa manifold.

In \S\ref{sec:cpf}, we recall the basic notions introduced in \cite{li-zhang} and we give some further examples. Since many related notions are introduced, several questions arise about the connections between them. We show that being \Cp\ and being \Cf\ are not related properties; we also study $4$-dimensional non-integrable almost-complex manifolds (while they are \Cpf\ because of the strong result in \cite{draghici-li-zhang}, there may or may not be a type-decomposition at the level of $H^1_{dR}$, see Proposition \ref{prop:complex-Cpf-4}).

In \S\ref{sec:balanced}, we use the same argument as in \cite{li-zhang} (where it is proved that, if $J$ is a \Cf\ almost-K\"ahler structure, then $\mathcal{K}^t_J=\mathcal{K}^c_J+H^{-}_J(M)$) to study the cone of semi-K\"ahler structures. Recall that a semi-K\"ahler structure on a $2n$-dimensional almost-complex manifold is given by a $2$-form $\omega$ which is the fundamental $2$-form of a Hermitian metric and satisfies $\de\left(\omega^{n-1}\right)=0$. We will introduce two cones:
\begin{eqnarray*}
\mathcal{K}b^t_J &:=& \Big\{\left[\Phi\right] \in H^{2n-2}_{dR}(M;\R) \st \Phi \text{ is a }\de\text{-closed real }(2n-2)\text{-form}\\[5pt]
&& \text{that is positive on the complex }(n-1)\text{-subspaces}\Big\} \subseteq H^{2n-2}_{dR}(M;\R)
\end{eqnarray*}
and
\begin{eqnarray*}
\mathcal{K}b^c_J &:=& \Big\{\left[\Phi\right] \in H^{2n-2}_{dR}(M;\R) \st \Phi \text{ is a }\de\text{-closed real }(n-1,n-1)\text{-form}\\[5pt]
&& \text{that is positive on the complex }(n-1)\text{-subspaces}\Big\}\\[5pt]
&=& \left\{[\varphi^{n-1}] \st \varphi\text{ is a semi-K\"ahler form}\right\}\subseteq H^{(n-1,n-1)}_{J}(M)_\R \;.
\end{eqnarray*}
In the same spirit of \cite{li-zhang} and using the techniques in \cite{sullivan}, we prove the following.\\

{\noindent\bfseries Theorem \ref{thm:coni-semiK}.}{\itshape\
 Let $(M,J)$ be a $2n$-dimensional compact almost-complex manifold. Assume
 that $\mathcal{K}b^c_J\neq\varnothing$ (that is, $(M,J)$
 is semi-K\"ahler) and that $0\not\in\mathcal{K}b^t_J$. Then
\begin{equation}\tag{\ref{eq:cono-1}}
\mathcal{K}b^t_J\cap H^{(n-1,n-1)}_J(M)_\R = \mathcal{K}b^c_J
\end{equation}
and
\begin{equation}\tag{\ref{eq:cono-2}}
\mathcal{K}b^c_J+H^{(n,n-2),(n-2,n)}_J(M)_\R \subseteq \mathcal{K}b^t_J \;.
\end{equation}
Moreover, if $J$ is \Cf\ at the $(2n-2)$-th stage, then
\begin{equation}\tag{\ref{eq:cono-3}}
\mathcal{K}b^c_J+H^{(n,n-2),(n-2,n)}_J(M)_\R = \mathcal{K}b^t_J \;.
\end{equation}
}

\noindent  Observe that we have to suppose that no classes in $\mathcal{K}b^t_J$ are zero. This is clearly true for $\mathcal{K}^t_J$.

In \S\ref{sec:Cpf-and-pf}, we prove a result similar to \cite[Theorem 4.1]{fino-tomassini} by A. Fino and the second author: we will prove that a semi-K\"ahler manifold $\left(M,J,\omega\right)$ which is complex-\Cpf\ at the first stage and whose $\omega$ induces an isomorphism
$$
\omega^{n-1}\colon H^1_{dR}(M;\R)\stackrel{\simeq}{\longrightarrow}H^{2n-1}_{dR}(M;\R)
$$
is also complex-\pf\ at the first stage; moreover, under these hypotheses, we have the duality $H^{(1,0)}_J(M)\simeq H^J_{(0,1)}(M)$. The required hypothesis is not too restrictive nor trivially satisfied, as we show in some examples; however, note that it is an open property.

In \S\ref{sec:semicontinuity}, we consider the problem of the semi-continuity of $h^+_{J_t}(M):=\dim_\R H^+_{J_t}(M)$ and of $h^-_{J_t}(M):=\dim_\R H^-_{J_t}(M)$, where $\left\{J_t\right\}_t$ is a curve of almost-complex structures on a compact manifold $M$. T. Dr\v{a}ghici, T.-J. Li and W. Zhang proved that, on a $4$-dimensional compact manifold $M^4$ (where all the $J_t$ are \Cpf), $h^+_{J_t}(M^4)$ is a lower-semi-continuous function in $t$ and $h^-_{J_t}(M^4)$ is upper-semi-continuous. We give some examples showing that the situation is more complicated in dimension greater than four: in Example \ref{ex:etabeta} we present a curve of almost-complex structures on the manifold $\eta\beta_5$ as a counterexample to the upper-semi-continuity of $h^-_{J_t}(\eta\beta_5)$ and in Example \ref{ex:s3t3} we consider $\mathbb{S}^3\times\T^3$ as a counterexample to the lower-semi-continuity of $h^+_{J_t}(\mathbb{S}^3\times\T^3)$; however, note that the \Cp ness does not hold for all the structures of the curves in 
these examples, therefore one could ask for more fulfilling counterexamples. Since the problem of the semi-continuity of $h^+_{J_t}(M)$ is related to finding $J_t$-compatible symplectic structures near a $J_0$-compatible symplectic form on $M$ for $t$ small enough, we study the problem of finding obstructions to the existence of classes in $H^+_{J_t}(M)$ each of which has a $J_t$-invariant representative close to a $J_0$-invariant one. Semi-continuity here is rarely satisfied, as Proposition \ref{prop:scs-forte} shows.

\section{$\mathcal{C}^\infty$-pure-and-full almost-complex structures}\label{sec:cpf}
Let $M$ be a differentiable manifold of dimension $2n$ and let $J$ be an almost-complex structure on $M$, that is, an
endomorphism of $TM$ such that $J^2=-\id_{TM}$.

Following \cite{li-zhang}, define the following subgroups of $H^\bullet_{dR}(M;\,\C)$ and $H^\bullet_{dR}(M;\,\R)$:
if $S$ is a set of pairs $(p,q)\in\N^2$, let $H^S_J(M)$ be
$$
H^S_J(M) \;\definizione\; \left\{\left[\alpha\right]\in H^\bullet_{dR}(M;\C) \st
\alpha\in\left(\bigoplus_{(p,q)\in S}\wedge^{p,q}M\right)\,\cap\,\ker\de \right\}
$$
and
$$
H^S_J(M)_\R \;\definizione\; H^S_J(M) \,\cap\, H^\bullet_{dR}(M;\R) \;.
$$
We are mainly interested in
$$ H^+_J(M) \;:=\; H^{(1,1)}_J(M)_\R \;, \qquad H^-_J(M) \;:=\; H^{(2,0),(0,2)}_J(M)_\R \;.$$

Using the space of currents $\mathcal{D}'_\bullet(M)$ instead of the space of differential forms $\wedge^\bullet M$ and
the de Rham homology $H_\bullet(M;\R)$ instead of the de Rham cohomology $H^\bullet_{dR}(M;\R)$, we can define the subgroups
$H^J_{S}(M)$ and $H^J_{S}(M)_\R$ similarly.
\smallskip

T.-J. Li and W. Zhang gave the following.
\begin{defi}[{\cite[Definition 2.4, Definition 2.5, Lemma 2.6]{li-zhang}}]\label{def:Cpf}
 An almost-complex structure $J$ on $M$ is said:
\begin{itemize}
 \item \emph{\Cp} if
$$ H^{(2,0),(0,2)}_J(M)_\R\;\cap\; H^{(1,1)}_J(M)_\R \;=\; \left\{\left[0\right]\right\} \;; $$
 \item \emph{\Cf} if
$$
H^{(2,0),(0,2)}_J(M)_\R\;+\; H^{(1,1)}_J(M)_\R \;=\; H^2_{dR}(M;\R) \;;
$$
 \item \emph{\Cpf} if it is both \Cp\ and \Cf, i.e. if the following decomposition holds:
$$
H^{(2,0),(0,2)}_J(M)_\R\,\oplus\, H^{(1,1)}_J(M)_\R \,=\, H^2_{dR}(M;\R) \;.
$$
\end{itemize}
For a complex manifold $M$, by saying that $M$ is, for example, \Cpf, we mean that its naturally associated  integrable
almost-complex structure is \Cpf.

Using currents instead of forms, the definition of {\emph \pf} structure is recovered. Furthermore, we will say that an almost-complex manifold $\left(M,J\right)$ is \emph{\Cpf\ at the $k$-th stage} if the decomposition
$$ H^k_{dR}(M;\R) = \bigoplus_{p+q=k}H^{(p,q),(q,p)}_J(M)_\R $$
holds, and \emph{complex-\Cpf\ at the $k$-th stage} if the decomposition
$$ H^k_{dR}(M;\C) = \bigoplus_{p+q=k}H^{(p,q)}_J(M) $$
holds; again, similar notions for the currents will be understood.
\end{defi}

We refer to \cite{li-zhang}, \cite{draghici-li-zhang}, \cite{fino-tomassini} and \cite{angella-tomassini} for a more
detailed study of these and other properties. We recall that every compact complex surface and every compact K\"ahler
manifold is \Cpf\ (see \cite{draghici-li-zhang}), since, in these cases, the Hodge-Fr\"olicher spectral sequence degenerates at the first
level and the trivial filtration on the space of differential forms induces a Hodge structure of weight $2$ on $H^2_{dR}$, see, e.g.,
\cite{barth-peters-vandeven}, \cite{deligne} (in fact, a compact K\"ahler manifold is complex-\Cpf\ at every stage); more in general, T. Dr\v{a}ghici, T.-J. Li and W. Zhang proved in \cite[Theorem 2.3]{draghici-li-zhang} that every $4$-dimensional compact almost-complex manifold is \Cpf.

\medskip

\subsection{\Cp\ versus \Cf}
We note that, on a $4$-dimensional compact almost-complex manifold, being \Cp\ is, in fact, a consequence of being \Cf\
(in a little more general setting, on a $2n$-dimensional compact manifold one can prove that being \Cf\ at the stage $k$ implies
being \Cp\ at the stage $2n-k$, see \cite[Proposition 2.30]{li-zhang} and \cite[Theorem 2.4]{angella-tomassini}).
Here, we give two examples showing that, in higher dimension, being \Cp\ and being \Cf\ are non-related properties.

\begin{ex}{\em Being \Cf\ does not imply being \Cp.}
 Let $N_1=\Gamma_1\backslash G_1$ be a $6$-dimensional compact nilmanifold, quotient of the simply-connected nilpotent Lie group $G_1$ whose associated Lie algebra $\mathfrak{g}_1$ has structure equations (expressed with respect to a basis $\{e^i\}_{i\in\{1,\ldots,6\}}$
 of $\mathfrak{g}_1^*$, which we will confuse with the left-invariant coframe on $N_1$)
$$
\left\{
\begin{array}{rcl}
 \de e^1 &=& 0 \\[5pt]
 \de e^2 &=& 0 \\[5pt]
 \de e^3 &=& 0 \\[5pt]
 \de e^4 &=& e^1\wedge e^2 \\[5pt]
 \de e^5 &=& e^1\wedge e^4 \\[5pt]
 \de e^6 &=& e^2\wedge e^4
\end{array}
\right. \;,
$$
that is, in compact notation,
$$ \left( 0^3,\;12,\;14,\;24 \right) \;.$$
Consider the invariant complex structure on $N_1$ whose space of $(1,0)$-forms is generated, as a $\mathcal{C}^\infty(N_1)$-module, by
$$
\left\{
\begin{array}{rcl}
 \varphi^1 &:=& e^1+\im e^2 \\[5pt]
 \varphi^2 &:=& e^3+\im e^4 \\[5pt]
 \varphi^3 &:=& e^5+\im e^6
\end{array}
\right. \;;
$$
the integrability condition is easily checked, since
$$
\left\{
\begin{array}{rcl}
 2\de\varphi^1 &=& 0 \\[5pt]
 2\de\varphi^2 &=& \varphi^{1\bar1} \\[5pt]
 2\de\varphi^3 &=& -\im \varphi^{12}+\im\varphi^{1\bar2}
\end{array}
\right.
$$
(here and later, we use notations like $\varphi^{A\bar B}$ to mean $\varphi^A\wedge\bar\varphi^B$).\\
Nomizu's theorem (see \cite{nomizu}) makes the computation of the cohomology straightforward:
$$ H^2_{dR}(N_1;\C) \;=\; \C \left\langle \varphi^{13},\;\varphi^{\bar1\bar3}\right\rangle \,\oplus\, \C\left\langle \varphi^{1\bar3}-
\varphi^{3\bar1} \right\rangle \,\oplus\, \C \left\langle \varphi^{12}+\varphi^{1\bar2},\;\varphi^{2\bar1}-\varphi^{\bar1\bar2}
\right\rangle \;,$$
where harmonic representatives with respect to the metric $g=\sum_i\varphi^i\odot \bar\varphi^i$ are listed instead of their classes.
Therefore, we have
$$ H^{(2,0),(0,2)}_J\left(N_1\right) \;=\; \C \left\langle \varphi^{13},\;\varphi^{\bar1\bar3}\right\rangle \,\oplus\, \C \left\langle \varphi^{12}+\varphi^{1\bar2},\;
\varphi^{2\bar1}-\varphi^{\bar1\bar2} \right\rangle $$
and
$$ H^{(1,1)}_J\left(N_1\right) \;=\; \C\left\langle \varphi^{1\bar3}-\varphi^{3\bar1} \right\rangle \,\oplus\, \C \left\langle \varphi^{12}+\varphi^{1\bar2},\;
\varphi^{2\bar1}-\varphi^{\bar1\bar2} \right\rangle \;;$$
that is to say: \emph{$J$ is a \Cf\ complex structure which is not \Cp}.
\end{ex}

\begin{ex}{\em Being \Cp\ does not imply being \Cf.}
 Let $N_2=\Gamma_2\backslash G_2$ be a $6$-dimensional compact nilmanifold, quotient of the simply-connected nilpotent Lie group $G_2$ whose associated Lie algebra $\mathfrak{g}_2$ has structure equations
$$ \left( 0^4,\;12,\;34 \right) $$
and consider on it the complex structure given requiring that the forms
$$
\left\{
\begin{array}{rcl}
 \varphi^1 &:=& e^1+\im e^2 \\[5pt]
 \varphi^2 &:=& e^3+\im e^4 \\[5pt]
 \varphi^3 &:=& e^5+\im e^6
\end{array}
\right.
$$
are of type $(1,0)$; the integrability condition follows from
$$
\left\{
\begin{array}{rcl}
 2\de\varphi^1 &=& 0 \\[5pt]
 2\de\varphi^2 &=& 0 \\[5pt]
 2\de\varphi^3 &=& \im \varphi^{1\bar1}-\im\varphi^{2\bar2}
\end{array}
\right. \;.
$$
Nomizu's theorem (see \cite{nomizu}) gives
\begin{eqnarray*}
H^2_{dR}\left(N_2;\C\right) &=& \C \left\langle \varphi^{12},\;\varphi^{\bar1\bar2} \right\rangle \,\oplus\, \C \left\langle \varphi^{1\bar 2},\;
\varphi^{2\bar1} \right\rangle \\[5pt]
&&\oplus\,\C\left\langle \varphi^{13}+\varphi^{1\bar3},\;\varphi^{3\bar1}-\varphi^{\bar1\bar3},\;\varphi^{3\bar2}-\varphi^{\bar2\bar3},\;
\varphi^{23}-\varphi^{2\bar3} \right\rangle \;;
\end{eqnarray*}
we claim that
$$ H^{(2,0),(0,2)}_J\left(N_2\right) \;=\; \C \left\langle \varphi^{12},\;\varphi^{\bar1\bar2} \right\rangle $$
and
$$ H^{(1,1)}_J\left(N_2\right) \;=\; \C \left\langle \varphi^{1\bar 2},\;\varphi^{2\bar1} \right\rangle \;;$$
indeed, with respect to $g=\sum_i \varphi^i\odot \bar\varphi^i$, one computes
$$ \del^*\,\varphi^{13}\;=\;\del^*\,\varphi^{23}\;=\;\del^*\,\varphi^{12}\;=\;0 \;,$$
that is, $\varphi^{13}$, $\varphi^{12}$ and $\varphi^{23}$ are $g$-orthogonal to the space $\del\wedge^{1,0}$; in the same way,
one checks that
$$ \del^*\,\varphi^{1\bar2}\;=\;\delbar^*\,\varphi^{1\bar2}\;=\;\del^*\,\varphi^{1\bar3}\;=\;\delbar^*\,\varphi^{1\bar3}\;=\;0 \;;$$
that is to say: \emph{$J$ is a \Cp\ complex structure which is not \Cf}.
\end{ex}

\noindent For the sake of clearness, we collect the previous examples in the following.
\begin{prop}
 Being \Cp\ and being \Cf\ are not related properties.
\end{prop}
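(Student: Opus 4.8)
The proposition is a direct consequence of the two examples just exhibited, so the plan is simply to record that, taken together, they exclude both conceivable implications. Recall that to say a property $P$ implies a property $Q$ means that every compact almost-complex manifold with property $P$ also has property $Q$; hence it suffices to produce one manifold that is \Cf\ but not \Cp, and another that is \Cp\ but not \Cf.

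For the direction ``\Cf\ does not imply \Cp'' I would invoke the first example, the nilmanifold $N_1$ with structure equations $\left(0^3,\,12,\,14,\,24\right)$ together with the invariant complex structure whose $(1,0)$-coframe is $\varphi^1=e^1+\im e^2$, $\varphi^2=e^3+\im e^4$, $\varphi^3=e^5+\im e^6$. From the Nomizu computation of $H^2_{dR}(N_1;\C)$ displayed there one sees that $H^{(2,0),(0,2)}_J(N_1)+H^{(1,1)}_J(N_1)$ already contains all five listed generators, so $J$ is \Cf\ (both subgroups being stable under conjugation, the analogous statement over $\R$ follows); on the other hand $\varphi^{12}$ and $\varphi^{1\bar 2}$ are $\de$-closed and cohomologous, differing by the exact form $2\im\,\de\varphi^3$, so the class they define lies simultaneously in $H^{(2,0),(0,2)}_J(N_1)$ and in $H^{(1,1)}_J(N_1)$, whence $H^{(2,0),(0,2)}_J(N_1)_\R\cap H^{(1,1)}_J(N_1)_\R\neq\left\{[0]\right\}$ and $J$ is not \Cp.

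For the converse direction I would invoke the second example, the nilmanifold $N_2$ with structure equations $\left(0^4,\,12,\,34\right)$ and the complex structure defined by the analogous $(1,0)$-coframe. The orthogonality computations with respect to $g=\sum_i\varphi^i\odot\bar\varphi^i$ carried out there yield $H^{(2,0),(0,2)}_J(N_2)=\C\left\langle\varphi^{12},\,\varphi^{\bar 1\bar 2}\right\rangle$ and $H^{(1,1)}_J(N_2)=\C\left\langle\varphi^{1\bar 2},\,\varphi^{2\bar 1}\right\rangle$; these meet only in $\left\{[0]\right\}$, so $J$ is \Cp, while their sum has complex dimension $4$, strictly less than $\dim_\C H^2_{dR}(N_2;\C)=8$, so $J$ is not \Cf. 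Combining the two examples proves the statement.

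The proof is therefore purely a matter of citation; the only genuine work, and so the only real obstacle, lies inside the two examples, namely in verifying that the exhibited forms exhaust the relevant pure-type cohomology. For $N_1$ this is secured by producing $\de$-harmonic representatives, so that Nomizu's theorem pins down $H^2_{dR}$ exactly; for $N_2$ by checking $\del$- and $\delbar$-orthogonality of the candidate forms to $\del\wedge^{1,0}$ and $\delbar\wedge^{0,1}$. Once those verifications are in hand, as above, nothing more is required.
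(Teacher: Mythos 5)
Your proposal is correct and takes essentially the same route as the paper: the proposition there is explicitly just a summary of the two preceding examples (the nilmanifold $N_1$ with structure equations $\left(0^3,\,12,\,14,\,24\right)$, which is \Cf\ but not \Cp, and $N_2$ with $\left(0^4,\,12,\,34\right)$, which is \Cp\ but not \Cf), and your reading of them --- including the observation that $\varphi^{12}$ and $\varphi^{1\bar2}$ are cohomologous via the exact form $2\im\,\de\varphi^3$, so that their common (nonzero) class lies in both $H^{(2,0),(0,2)}_J(N_1)$ and $H^{(1,1)}_J(N_1)$ --- agrees with the computations given there. You correctly identify that the only substantive work is the verification, inside the examples, that the listed forms exhaust the pure-type cohomology (Nomizu plus the harmonicity/orthogonality checks), which is exactly where the paper places it.
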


\medskip

\subsection{Complex-\Cpf ness for four-manifolds}
On a compact complex surface $M$ with $b^1(M)$ even, one has that the natural filtration associated to the bi-graded complex
$\wedge^{\bullet,\bullet}M$ induces a Hodge structure of weight $2$ on $H^2_{dR}(M;\,\C)$ and a Hodge structure of weight
$1$ on $H^1_{dR}(M;\,\C)$, see \cite{barth-peters-vandeven}. In particular, one has that $M$ is complex-\Cpf\ at every stage
(and so \Cpf), that is, $H^2_{dR}(M;\,\C)$ splits as
$$ H^2_{dR}(M;\,\C) \;=\; H^{(2,0)}_J(M) \,\oplus\, H^{(1,1)}_J(M) \,\oplus\, H^{(0,2)}_J(M) $$
and $H^1_{dR}(M;\,\C)$ splits as
$$ H^1_{dR}(M;\,\C) \;=\; H^{(1,0)}_J(M) \,\oplus\, H^{(0,1)}_J(M) \;. $$
(We remark that, as a consequence of the Enriques and Kodaira's classification of compact complex surfaces, actually every $4$-dimensional compact integrable almost-complex manifold $M$ with $b^1(M)$ even is a K\"ahler surface.)\\
One could ask if every $4$-dimensional compact almost-complex manifold is complex-\Cpf\ at every stage.
The following example proves that this is not true.

\begin{ex}
 Consider the standard K\"ahler structure $(J,\,\omega)$ on $\T^2_\C$ and let $\left\{J_t\right\}_t$ be the curve of almost-complex
 structures defined by
$$ J_t \;:=:\; J_{t,\,\ell} \;:=\; \left(\id - t\, L\right) \, J\, \left(\id-t\,L\right)^{-1} \;=\;
\left(
\begin{array}{cc|cc}
 && -\frac{1-t\,\ell}{1+t\,\ell} &\\
&&& -1 \\
\hline
\frac{1+t\,\ell}{1-t\,\ell} &&&\\
&1&&
\end{array}
\right)
$$
for $t$ small enough,
where
$$
L\;=\;
\left(
\begin{array}{cc|cc}
 \ell &&& \\
 & 0 &&\\
\hline
&&-\ell &\\
&&&0
\end{array}
\right) \;,
$$
with $\ell=\ell(x_2)\in\mathcal{C}^\infty(\R^4;\,\R)$ a $\Z^4$-periodic non-constant function.
For $t\neq 0$ small enough, a straightforward computation yields
$$ H^{(1,0)}_{J_t} \left(\T^2_\C\right) \;=\; \C\left\langle \de x^2+\im \de x^4 \right\rangle\;,\qquad H^{(0,1)}_{J_t}
\left(\T^2_\C\right) \;=\; \C\left\langle \de x^2-\im \de x^4 \right\rangle $$
therefore one argues that $J_t$ is not complex-\Cpf\ at the $1$-st stage, being
$$ \dim_\C H^{(1,0)}_{J_t}\left(\T^2_\C\right)+\dim_\C H^{(0,1)}_{J_t}\left(\T^2_\C\right) \;=\; 2 \;<\; 4\;=\; b^1\left(\T^2_\C\right) \;.
$$
\end{ex}

\smallskip

Moreover, one could ask if, for a $4$-dimensional almost-complex manifold, being complex-\Cpf\ at the $1$-st stage or being complex-\Cpf\ at the $2$-nd stage implies integrability: the answer is negative, as the following example shows.\\
We remark that T. Dr\v{a}ghici, T.-J. Li and W. Zhang proved in \cite[Corollary 2.14]{draghici-li-zhang} that an almost-complex structure on a $4$-dimensional compact manifold $M$ is complex-\Cpf\ at the $2$-nd stage if and only if $J$ is integrable or $h^-_J=0$.

\begin{ex}
 Consider a $4$-dimensional compact nilmanifold $M=\Gamma\backslash G$, quotient of the simply-connected nilpotent Lie group $G$ whose associated Lie algebra $\mathfrak{g}$ has structure equations
$$ \left(0^2,\;14,\;12\right) \;; $$
let $J$ be the almost-complex structure defined by
$$ J e^1 \;:=\; -e^2\; ,\qquad Je^3 \;:=\; -e^4 \;;$$
note that $J$ is not integrable, since $\textrm{Nij}(\vartheta_1,\vartheta_3)\neq 0$
(where $\left\{\vartheta_i\right\}_{i\in\{1,2,3,4\}}$ is the dual basis of $\left\{e^i\right\}_{i\in\{1,2,3,4\}}$). It has to be noted that a stronger fact holds, namely, $M$ has no integrable almost-complex structure: indeed, since $b^1(M)$ is even, if $\tilde J$ were a complex structure on $M$, then $M$ should carry a K\"ahler metric; this is not possible for compact nilmanifolds, unless they are tori.\\
One computes
$$ H^1_{dR}(M;\C)=\C\left\langle \varphi^1,\,\bar\varphi^1 \right\rangle\;,\qquad H^2_{dR}(M;\C)=\C\left\langle \varphi^{12}+
\varphi^{\bar1\bar 2},\,\varphi^{1\bar2}-\varphi^{2\bar1} \right\rangle \;.$$
Note however that $J$ is not complex-\Cpf\ at the $2$-nd stage but just \Cpf\ (the fact that $\left[\varphi^{12}+\varphi^{\bar1\bar2}\right]$ does not have a pure representative can be proved using an argument of invariance, as in Lemma \ref{lemma:invariant}). Moreover, observe that if we take the invariant almost-complex structure
$$ J' e^1 \;:=\; -e^3\; ,\qquad J'e^2 \;:=\; -e^4 \;,$$
we would have a complex-\Cpf\ at the $2$-nd stage structure which is not complex-\Cpf\ at the $1$-st stage (obviously, in this case, $h^-_{J'}=0$, according to \cite[Corollary 2.14]{draghici-li-zhang}).\\
Therefore:\\
{\itshape There exists a non-integrable almost-complex structure which is complex-\Cpf\ at the $1$-st stage.}
\end{ex}

\smallskip

\noindent In summary:
\begin{prop}\label{prop:complex-Cpf-4}
 There exist both $4$-dimensional compact almost-complex (non-complex) manifolds (with $b^1$ even) that are not complex-\Cpf\
 at the $1$-st stage and $4$-dimensional compact almost-complex (non-complex) manifolds (with $b^1$ even) that are.
\end{prop}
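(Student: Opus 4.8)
The plan is to obtain both assertions directly from the three Examples just discussed, so the proof reduces to checking that each of those examples has the required dimension, Betti-number parity, and non-integrability, and then reading off the (non-)decomposability of $H^1_{dR}$.

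For the first assertion, I would use the curve $\left\{J_t\right\}_t$ of almost-complex structures on $\T^2_\C$. Here $b^1\left(\T^2_\C\right)=4$ is even, and for $t\neq 0$ small enough the structure $J_t$ is not integrable, as one sees by computing the Nijenhuis tensor of $J_t$ and observing that it does not vanish because $\ell$ is non-constant. The computation recalled in that Example gives
$$
H^{(1,0)}_{J_t}\left(\T^2_\C\right)=\C\left\langle \de x^2+\im\,\de x^4\right\rangle,\qquad
H^{(0,1)}_{J_t}\left(\T^2_\C\right)=\C\left\langle \de x^2-\im\,\de x^4\right\rangle,
$$
so that $\dim_\C H^{(1,0)}_{J_t}\left(\T^2_\C\right)+\dim_\C H^{(0,1)}_{J_t}\left(\T^2_\C\right)=2<4=b^1\left(\T^2_\C\right)$; hence the first-stage complex type decomposition $H^1_{dR}=H^{(1,0)}_{J_t}\oplus H^{(0,1)}_{J_t}$ cannot hold, i.e. $J_t$ is not complex-\Cpf\ at the first stage.

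For the second assertion, I would use the $4$-dimensional compact nilmanifold $M=\Gamma\backslash G$ with structure equations $\left(0^2,\,14,\,12\right)$ together with the invariant almost-complex structure $J$ determined by $Je^1=-e^2$, $Je^3=-e^4$. Then $b^1(M)=2$ is even, and $M$ carries no integrable almost-complex structure: since $b^1(M)$ is even, a complex structure on $M$ would force a K\"ahler metric, which is impossible on a compact nilmanifold other than a torus, and $M$ is not a torus; in particular $J$ is non-integrable, which also discharges the ``(non-complex)'' requirement. Since $\de e^1=\de e^2=0$, the $(1,0)$-form $\varphi^1:=e^1+\im\,e^2$ is $\de$-closed, so $\left[\varphi^1\right]\in H^{(1,0)}_J(M)$ and $\left[\bar\varphi^1\right]\in H^{(0,1)}_J(M)$; by Nomizu's theorem $H^1_{dR}(M;\C)=\C\left\langle\varphi^1,\bar\varphi^1\right\rangle$, and these two classes form a basis, so
$$
H^1_{dR}(M;\C)=H^{(1,0)}_J(M)\oplus H^{(0,1)}_J(M),
$$
that is, $J$ is complex-\Cpf\ at the first stage.

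The only non-formal ingredients are the cohomology computations — elementary, carried out via Nomizu's theorem on the nilmanifold and by a direct invariant computation on $\T^2_\C$ — together with the Nijenhuis-tensor checks yielding non-integrability; none of these presents a real obstacle, and once they are in place the two halves of the Proposition are immediate.
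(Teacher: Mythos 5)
Your proposal is correct and follows essentially the same route as the paper, which proves the Proposition precisely by the two preceding Examples: the deformed torus $\left(\T^2_\C,J_t\right)$ for the negative assertion and the nilmanifold with structure equations $\left(0^2,\,14,\,12\right)$ and $Je^1=-e^2$, $Je^3=-e^4$ for the positive one. The only cosmetic difference is that you certify non-integrability of $J_t$ on the torus by a Nijenhuis-tensor computation (which does work, since $\ell$ is non-constant), whereas it also follows with no computation from the paper's remark that an integrable almost-complex structure on a compact four-manifold with $b^1$ even would yield a K\"ahler surface, hence a structure complex-\Cpf\ at every stage, contradicting your dimension count $2<4$.
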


\section{The cone of a semi-K\"ahler manifold}\label{sec:balanced}
Let $(M,J)$ be a compact almost-complex manifold. We recall that a symplectic form $\omega$ (that is, a $\de$-closed non-degenerate $2$-form)
is said to \emph{tame} $J$ if $\omega(\sspace, J\sspace)>0$ and it is said to be \emph{compatible} with $J$ if it tames $J$ and it
is $J$-invariant, that is $\omega(J\sspace,J\sspace)=\omega(\sspace,\sspace)$. We define the \emph{tamed $J$-cone} $\mathcal{K}^t_J$ as
the set of the cohomology classes of $J$-taming symplectic forms, and the \emph{compatible $J$-cone} $\mathcal{K}^c_J$ as the set of
the cohomology classes of the $J$-compatible ones. Clearly, $\mathcal{K}^t_J$ is an open convex cone in $H^2_{dR}(M;\R)$ and $\mathcal{K}^c_J$ is a convex subcone of $\mathcal{K}^t_J$ and it is contained in $H^{(1,1)}_J(M)_\R$ (if $(M,J)$ is a compact K\"ahler manifold, then $\mathcal{K}^c_J$ is an open convex cone in $H^{1,1}_{\delbar}(M)\cap H^2_{dR}(M;\R)$); moreover, both of them are subcones of the symplectic cone in $H^2_{dR}(M;\R)$. Concerning the relation between these two cones, T.-J. Li and W. Zhang proved the following.
\begin{thm}[{\cite[Proposition 3.1, Theorem 1.1, Corollary 1.1]{li-zhang}}]\label{thm:coni-li-zhang}
Suppose that \newline $(M,J)$ is a compact almost-complex manifold with $J$ almost-K\"ahler, that is,
$\mathcal{K}^c_J\neq\varnothing$. Then we have
$$
\mathcal{K}^t_J\cap H^{(1,1)}_J(M)_\R=\mathcal{K}^c_J\;,\qquad \mathcal{K}^c_J+H^{(2,0),(0,2)}_J(M)_\R\subseteq \mathcal{K}^t_J \;.
$$
Moreover, if $J$ is \Cf, then
\begin{equation}\label{eq:thm-li-zhang}
 \mathcal{K}^t_J\;=\; \mathcal{K}^c_J+ H^{(2,0),(0,2)}_J(M)_\R \;.
\end{equation}
In particular, if $(M,J)$ is a $4$-dimensional compact almost-complex manifold with non-empty $\mathcal{K}^c_J$, then \eqref{eq:thm-li-zhang}
holds; moreover, if $b^+(M)=1$, then $\mathcal{K}^t_J=\mathcal{K}^c_J$.
\end{thm}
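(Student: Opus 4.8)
The plan is to dispatch the two inclusions that are elementary by hand, and to reduce everything else to Sullivan's duality between transverse forms and the structure cycles of the cone structure of $J$-complex lines \cite{sullivan}. The inclusion $\mathcal{K}^c_J\subseteq\mathcal{K}^t_J\cap H^{(1,1)}_J(M)_\R$ is immediate: a $J$-compatible symplectic form tames $J$ and is $J$-invariant, hence of type $(1,1)$. For the second displayed inclusion I would argue pointwise: if $\Omega$ is a $J$-compatible symplectic form and $\psi$ is a $\de$-closed real $(2,0)+(0,2)$-form, then $\psi$ is $J$-anti-invariant, so $\psi(v,Jv)=-\psi(v,Jv)=0$ for every tangent vector $v$. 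Hence $(\Omega+\psi)(v,Jv)=\Omega(v,Jv)>0$ for $v\neq 0$, so $\Omega+\psi$ is a $\de$-closed, automatically non-degenerate, $J$-taming form with $[\Omega+\psi]=[\Omega]+[\psi]$; this proves $\mathcal{K}^c_J+H^{(2,0),(0,2)}_J(M)_\R\subseteq\mathcal{K}^t_J$.

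For the nontrivial direction of the first equality I would set up the compact cone structure $\mathcal{C}$ of oriented $J$-lines (the $2$-vectors $v\wedge Jv$) and invoke Sullivan's theory: a $\de$-closed real $2$-form tames $J$ exactly when it is positive on $\mathcal{C}$, and by the cone--current duality this happens if and only if its class pairs strictly positively with every nonzero class of the cone $\mathcal{Z}^J\subseteq H_2(M;\R)$ generated by the $J$-line structure cycles; thus $\mathcal{K}^t_J$ is the open cone dual to $\mathcal{Z}^J$. Now let $e\in\mathcal{K}^t_J\cap H^{(1,1)}_J(M)_\R$. By the duality $e$ pairs positively with every nonzero element of $\mathcal{Z}^J$, and by hypothesis $e$ has a $\de$-closed representative of type $(1,1)$. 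Since positivity of a real $(1,1)$-form is tested precisely on $J$-lines, applying Sullivan's theorem inside the space of $(1,1)$-forms—whose relevant coboundaries are the $(1,1)$-exact forms and whose structure cycles are again exactly the $J$-line cycles—produces a $\de$-closed \emph{positive} $(1,1)$-representative of $e$, that is, $e\in\mathcal{K}^c_J$.

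Granting the first equality, the equality \eqref{eq:thm-li-zhang} in the \Cf\ case follows quickly. Write $e\in\mathcal{K}^t_J$ as $e=e^{+}+e^{-}$ with $e^{+}\in H^{(1,1)}_J(M)_\R$ and $e^{-}\in H^{(2,0),(0,2)}_J(M)_\R$, which is possible because $J$ is \Cf. As anti-invariant forms vanish on $J$-lines, $e^{-}$ pairs to zero with every structure cycle, so $e^{+}$ has the same (positive) pairings with $\mathcal{Z}^J$ as $e$; by the duality $e^{+}\in\mathcal{K}^t_J$, and since $e^{+}\in H^{(1,1)}_J(M)_\R$ the first equality gives $e^{+}\in\mathcal{K}^c_J$. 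Hence $e\in\mathcal{K}^c_J+H^{(2,0),(0,2)}_J(M)_\R$, which together with the inclusion already established yields \eqref{eq:thm-li-zhang}. For the $4$-dimensional statement, every compact almost-complex $4$-manifold is \Cpf\ \cite{draghici-li-zhang}, in particular \Cf, so \eqref{eq:thm-li-zhang} holds. If moreover $b^+(M)=1$, I would show $H^-_J(M)=0$: a pointwise computation in a unitary coframe gives $\int_M\psi\wedge\psi>0$ for every nonzero real $(2,0)+(0,2)$-form $\psi$ (such forms are self-dual for a compatible metric), while any almost-K\"ahler class in $H^+_J(M)$ has positive cup-square and is orthogonal to $H^-_J(M)$ (in complex dimension $2$ the wedge of a $(1,1)$-form and a $(2,0)+(0,2)$-form has no top-degree part). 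Since the positive part of the intersection form is then one-dimensional, this forces $H^-_J(M)=0$, and \eqref{eq:thm-li-zhang} collapses to $\mathcal{K}^t_J=\mathcal{K}^c_J$.

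The main obstacle is the nontrivial inclusion of the first equality, and specifically the interplay between positivity and the Hodge type: the delicate point is not merely producing a positive closed representative (pure Sullivan duality) but arranging it to remain of type $(1,1)$. This is what I would need to justify carefully, namely that when the cone structure is $\mathcal{C}$ the positive structure currents are automatically directed by $J$-lines and the cohomological freedom one may use is exactly the $(1,1)$-exact forms, so that Sullivan's separation argument can be run entirely within the $(1,1)$ component; once this is secured, the other two statements follow formally as above.
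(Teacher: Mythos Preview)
The paper does not actually prove this theorem: it is quoted verbatim from \cite{li-zhang} and no argument is given. What the paper does prove is the semi-K\"ahler analogue, Theorem~\ref{thm:coni-semiK}, together with the auxiliary Theorems~\ref{thm:Kbt} and~\ref{thm:Kbc}, and your approach matches that line of argument almost exactly: identify $\mathcal{K}^t_J$ with the interior of the dual of the cone of structure cycles via Sullivan \cite{sullivan}, then run the Hahn--Banach separation argument \emph{inside} the real $(1,1)$-component to show that $\mathcal{K}^c_J$ is the interior of the same dual cone restricted to $H^{(1,1)}_J(M)_\R$ (this is precisely the content and method of Theorem~\ref{thm:Kbc} in degree $2$), from which the first equality is immediate; the inclusion $\mathcal{K}^c_J+H^{(2,0),(0,2)}_J(M)_\R\subseteq\mathcal{K}^t_J$ is done pointwise as you do; and the \Cf\ equality follows by splitting a class and observing anti-invariant classes pair trivially with structure cycles, exactly as in the proof of \eqref{eq:cono-3}. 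Your treatment of the $4$-dimensional corollary (invoking \cite{draghici-li-zhang} for \Cf ness, and the self-duality of real $(2,0)+(0,2)$-forms plus orthogonality to the K\"ahler class to force $H^-_J=0$ when $b^+=1$) is the standard one and is correct.

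In short: your proposal is correct and is essentially the Li--Zhang/Sullivan argument that the paper itself reproduces in the semi-K\"ahler setting; the ``delicate point'' you flag---that the separation can be carried out within the $(1,1)$-currents because the structure currents of $J$-lines are automatically of bi-dimension $(1,1)$---is exactly what the paper isolates in Lemma~2.5 and exploits in the proof of Theorem~\ref{thm:Kbc}.
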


We want to slightly generalize this result for balanced and semi-K\"ahler manifolds. We first recall some classical definitions
and also some results from \cite{sullivan} and \cite{li-zhang}.

Firstly, let us recall (following \cite{gray-hervella}) what a balanced manifold is and let us define two more cones.

\begin{defi}
Let $(M,J)$ be a $2n$-dimensional compact almost-complex manifold. A non-degenerate $2$-form $\omega$ is said to be \emph{semi-K\"ahler} if $\omega$ is the fundamental form associated to a Hermitian metric (that is, $\omega(\sspace,J\sspace)>0$ and $\omega(J\sspace,\,J\sspace)=\omega(\sspace,\,\sspace)$) and $\de\left(\omega^{n-1}\right)=0$; the term \emph{balanced} simply refers to semi-K\"ahler forms in the integrable case.
\end{defi}

\begin{defi}
We define $\mathcal{K}b^c_J$ as the set of the cohomology classes of the $\de$-closed real $(n-1,n-1)$-forms that are $J$-compatible and positive on the complex $(n-1)$-subspaces of $T^{\C}_xM$ for each $x\in M$; similarly, we define $\mathcal{K}b^t_J$ as the set of the cohomology classes of the $\de$-closed real $(2n-2)$-forms that are positive on the complex $(n-1)$-subspaces of $T^\C_xM$ for each $x\in M$.
\end{defi}

Note that $\mathcal{K}b^c_J$ and $\mathcal{K}b^t_J$ are convex cones in $H^{2n-2}_{dR}(M;\R)$; more precisely, $\mathcal{K}b^c_J$ is a subcone of $\mathcal{K}b^t_J$ and it is contained in $H^{(n-1,n-1)}_J(M)_\R$.

The following Lemma allows us to confuse the cone $\mathcal{K}b^{c}_J$ with the cone generated by the $(n-1)$-th powers of the semi-K\"ahler forms, that is we can write
$$ \mathcal{K}b^c_J = \left\{\left[\omega^{n-1}\right] \st \omega \text{ is a semi-K\"ahler form on }(M,J)\right\} \;. $$

\begin{lemma}[{see \cite[pp. 279-280]{silva}}]
 A real $(n-1,n-1)$-form $\Phi$ positive on the complex $(n-1)$-subspaces of $T^\C_xM$ (for each $x\in M$) can be written as
$\Phi=\varphi^{n-1}$ with $\varphi$ a $J$-taming real $(1,1)$-form.
(In particular, if $\Phi$ is $\de$-closed, then $\varphi$ is semi-K\"ahler.)
\end{lemma}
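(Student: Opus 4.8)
The plan is to reduce the statement to a fiberwise question in linear algebra and then globalize. Fix $x\in M$ and work in the complexified cotangent space $T^{\C}_xM$. A real $(n-1,n-1)$-form $\Phi$ on $T_xM$ that is positive on the complex $(n-1)$-subspaces is, by definition, an element of $\wedge^{n-1,n-1}(T^*_xM)$ which, when paired against the natural orientation volume of any complex $(n-1)$-plane, gives a positive number. The core linear-algebra fact I would establish is: such a $\Phi$ is necessarily of the form $\varphi^{n-1}$ for a (unique up to the obvious ambiguity) positive $(1,1)$-form $\varphi$, i.e. a $\varphi$ with $\varphi(\sspace,J\sspace)>0$. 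This is precisely the content of the Michelsohn--Silva characterization of the ``strongly positive'' cone in top-but-one bidegree; I would cite \cite{silva} for the computation, sketching only the idea: the map $\varphi\mapsto\varphi^{n-1}$ from positive $(1,1)$-forms to positive $(n-1,n-1)$-forms is a bijection because in a $J$-adapted basis it corresponds to the elementary-symmetric/Newton correspondence between the eigenvalues of a Hermitian matrix and those of its $(n-1)$-st compound (cofactor) matrix, which is invertible on the positive cone.

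The next step is to make this choice smooth in $x$. Since $\varphi$ is recovered from $\Phi$ by an explicit algebraic (indeed polynomial-then-inverse) formula valid pointwise on the open positivity cone, and $\Phi$ is a smooth form, the resulting $\varphi$ is a smooth $(1,1)$-form on $M$. By construction $\varphi(\sspace,J\sspace)>0$ at every point, so $\varphi$ is $J$-taming; being of type $(1,1)$ it is also $J$-invariant, hence it is the fundamental form of the Hermitian metric $\varphi(\sspace,J\sspace)$.

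Finally I would address the parenthetical remark. If $\Phi$ is $\de$-closed, then $\varphi^{n-1}=\Phi$ gives $\de(\varphi^{n-1})=0$, which is exactly the semi-K\"ahler (balanced, in the integrable case) condition for the Hermitian form $\varphi$; no further argument is needed here. The step I expect to be the genuine obstacle is the first one — the pointwise surjectivity of $\varphi\mapsto\varphi^{n-1}$ onto the strongly positive $(n-1,n-1)$-cone — because positivity notions in intermediate bidegrees are subtle (strongly positive, positive, and weakly positive cones differ in general); the saving grace in our situation is that in bidegree $(n-1,n-1)$ these notions collapse appropriately and the inverse of the compound-matrix map stays inside the positive-definite cone, which is what \cite{silva} works out. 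I would therefore lean on that reference for the delicate part and keep the globalization and the $\de$-closedness remark as short formal consequences.
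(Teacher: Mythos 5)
Your proposal is correct and matches the paper's treatment: the paper gives no proof of this lemma beyond the citation to Silva (the underlying fact is Michelsohn's pointwise bijectivity of $\varphi\mapsto\varphi^{n-1}$ between the positive $(1,1)$- and $(n-1,n-1)$-cones, which is exactly the adjugate-matrix argument you sketch), and your globalization via smoothness of the algebraic inverse on the open positivity cone plus the immediate $\de$-closedness remark fills in precisely what the paper leaves implicit. No gaps.
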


Therefore, in the integrable case, i.e. for $\left(M,J\right)$ a $2n$-dimensional compact complex manifold, the cone $\mathcal{K}b^c_J$ is just the cone of balanced structures on $\left(M,J\right)$; on the other hand, in this case, $\mathcal{K}b^t_J$ is the cone of \emph{strongly Gauduchon metrics} on $\left(M,J\right)$, that is, of positive-definite $(1,1)$-forms $\gamma$ on $M$ such that the $(n,n-1)$-form $\del\left(\gamma^{n-1}\right)$ is $\delbar$-exact (see \cite[Definition 3.1]{popovici}); these metrics have been introduced by D. Popovici in \cite{popovici} as a special case of the \emph{Gauduchon metrics}, for which $\del\left(\gamma^{n-1}\right)$ is only $\delbar$-closed (see \cite{gauduchon}) (obviously, the notions of Gauduchon metric and of strongly Gauduchon metric coincide if the $\del\delbar$-Lemma holds); moreover, D. Popovici proved in \cite[Lemma 3.2]{popovici} that a $2n$-dimensional compact complex manifold $M$ carries a strongly Gauduchon metric if and only if there exists a $\de$-closed 
real $(2n-2)$-form $\Omega$ such that its component $\Omega^{(n-1,n-1)}$ of type $(n-1,n-1)$ satisfies $\Omega^{(n-1,n-1)}>0$.

\smallskip

The aim of this section is to compare $\mathcal{K}b^c_J$ and $\mathcal{K}b^t_J$
as Theorem \ref{thm:coni-li-zhang} does in the almost-K\"ahler case.

We give here a short resume of some necessary notions and results from \cite{sullivan}.\\
Recall that a \emph{cone structure} of $p$-directions on a differentiable manifold $M$ is a continuous
field $\{C(x)\}_{x\in M}$ with $C(x)$ a compact convex cone in $\wedge^p(T_xM)$. A $p$-form $\omega$
is called \emph{transverse} to a cone structure $C$ if $\omega\lfloor_x(v)>0$ for all $v\in C(x)\setminus\{0\}$,
varying $x\in M$; using the partitions of unity, it is possible to construct a transverse form for any given
$C$. Each cone structure $C$ gives rise to a cone $\mathfrak{C}$ of \emph{structure currents}, which are
the currents generated by the Dirac currents associated to the elements in $C(x)$, see \cite{sullivan};
one can prove that $\mathfrak{C}$ is a compact convex cone in $\mathcal{D}'_{p}(M)$ (we are assuming that $M$ is compact).
Moreover, one defines the cone $\mathcal{Z}\mathfrak{C}$ of the \emph{structure cycles} as the subcone
of $\mathfrak{C}$ consisting of $\de$-closed currents; we use $\mathcal{B}$ to denote the set of exact currents. Therefore, one has the cone $H\mathfrak{C}$ in $H_{p}(M;\R)$
obtained by the classes of the structure cycles. The dual cone of $H\mathfrak{C}$ is denoted by
$\breve{H}\mathfrak{C}\subseteq H^{p}_{dR}(M;\R)$ and it is characterized by the relation
$\left(\breve{H}\mathfrak{C},H\mathfrak{C}\right)\geq0$; its interior (which we will denote by
$\interno\breve{H}\mathfrak{C}$) is characterized by $\left(\interno\breve{H}\mathfrak{C},H\mathfrak{C}\right)>0$.
Finally, recall that a cone structure of $2$-directions is said to be \emph{ample} if, for
every $x\in M$, $C(x)\cap \textrm{span}\{e\in S_\tau\st \tau \text{ is a }2\text{-plane}\}\neq \{0\}$,
where $S_\tau$ is the Schubert variety given by the set of $2$-planes intersecting $\tau$ in at least one line;
by \cite[Theorem III.2]{sullivan}, an ample cone structure admits non-trivial structure cycles.

We are interested in the following cone structures. Given a $2n$-dimensional compact almost-complex manifold $(M,J)$ and
fixed $p\in\{0,\ldots, n\}$, let $C_{p,J}$ be the cone whose elements are the compact convex cones $C_{p,J}(x)$
generated by the positive combinations of $p$-dimensional complex subspaces of $T^\C_xM$ belonging to $\wedge^{2p}(T^\C_xM)$. We will
denote with $\mathfrak{C}_{p,J}$ the compact convex cone (see \cite[III.4]{sullivan}) of the structure
currents (also called \emph{complex currents}) and with $\mathcal{Z}\mathfrak{C}_{p,J}$ the compact convex
cone (see \cite[III.7]{sullivan}) of the structure cycles (also called \emph{complex cycles}). The structure
cone $C_{1,J}$ is ample, see \cite[p. 249]{sullivan}, therefore it admits non-trivial cycles.

We need also the following (see e.g. \cite[Proposition I.1.3]{silva}).

\begin{lemma}
 A structure current in $\mathfrak{C}_{p,J}$ is a positive current of bi-dimension $(p,p)$.
\end{lemma}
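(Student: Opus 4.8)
The plan is to reduce the assertion to a pointwise fact of multilinear algebra. Recall that $\mathfrak{C}_{p,J}$ is the (weak-$*$) closed convex cone in $\mathcal{D}'_{2p}(M)$ generated by the Dirac currents $\delta_v$ attached to the vectors $v\in C_{p,J}(x)$, as $x$ ranges over $M$, structure currents being assembled from these by convex combination, weak-$*$ limits and integration against a positive measure on $M$ (see \cite{sullivan}). Since the set of positive currents of bi-dimension $(p,p)$ is itself a weak-$*$ closed convex cone stable under all of these operations, it is enough to prove that each generating Dirac current $\delta_v$ is a positive current of bi-dimension $(p,p)$. By definition $v$ is a finite positive combination of the canonically oriented $2p$-vectors $v_W$ attached to $p$-dimensional complex subspaces $W\subseteq T_xM$, so by convexity once more I may assume $v=v_W$ for a single such $W$.

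I would then make the bidegree explicit. Choosing a $\C$-basis $\zeta_1,\dots,\zeta_p$ of $W^{1,0}:=W^{\C}\cap T^{1,0}_xM$, the conjugates $\bar\zeta_1,\dots,\bar\zeta_p$ form a basis of $W^{0,1}$, and expressing a real oriented basis of $W$ in terms of the $\zeta_k,\bar\zeta_k$ gives
$$
v_W \;=\; c_p\,\zeta_1\wedge\bar\zeta_1\wedge\cdots\wedge\zeta_p\wedge\bar\zeta_p \;\in\; \wedge^{p,p}\bigl(T^{\C}_xM\bigr)
$$
for a suitable nonzero constant $c_p$. Consequently $\delta_{v_W}$ annihilates every form whose component of bidegree $(p,p)$ at $x$ vanishes; this is exactly the statement that $\delta_{v_W}$ has bi-dimension $(p,p)$.

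Positivity is then almost formal: by definition $\delta_{v_W}$ is a positive current of bi-dimension $(p,p)$ as soon as $\langle\delta_{v_W},\beta\rangle=\beta(v_W)\ge0$ for every strongly positive test $(p,p)$-form $\beta$, i.e. for $\beta$ of the form $\im\,\alpha_1\wedge\bar\alpha_1\wedge\cdots\wedge\im\,\alpha_p\wedge\bar\alpha_p$ with $\alpha_1,\dots,\alpha_p\in\wedge^{1,0}_xM$; by the displayed identity this pairing equals a positive multiple of $\bigl|\det\bigl(\alpha_i(\zeta_j)\bigr)_{i,j}\bigr|^{2}\ge0$ — the classical fact that a strongly positive $(p,p)$-form evaluates non-negatively on the canonical orientation of a complex $p$-plane (equivalently, that integration over a complex submanifold defines a positive current). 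Undoing the two convexity/closure reductions then yields the claim for every $T\in\mathfrak{C}_{p,J}$. I expect the only point needing genuine care to be the bidegree computation above: one must keep track of the permutation sign $(-1)^{p(p-1)/2}$ and the powers of $\im$ occurring in the passage from a real oriented basis of $W$ to the basis $\zeta_1,\bar\zeta_1,\dots,\zeta_p,\bar\zeta_p$, in order to be sure that $v_W$ lies in the correct (positive) half of the line $\R\,\zeta_1\wedge\bar\zeta_1\wedge\cdots\wedge\zeta_p\wedge\bar\zeta_p$; once the orientation conventions of \cite{sullivan} and \cite{silva} are aligned, everything follows from that single identity.
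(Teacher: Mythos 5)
The paper offers no proof of this lemma at all: it is stated with a pointer to \cite[Proposition I.1.3]{silva}, so there is nothing internal to compare your argument against. Your proof --- reducing, via convexity and the weak-$*$ closedness of the cone of positive currents of bi-dimension $(p,p)$, to the Dirac current of a single complex $p$-plane, and then checking pointwise that $v_W$ lies in $\wedge^{p,p}(T^{\C}_xM)$ and that a strongly positive form evaluates on it to a positive multiple of $\bigl|\det\bigl(\alpha_i(\zeta_j)\bigr)\bigr|^2\geq 0$ --- is precisely the standard argument behind the cited reference and is correct, the only genuine care being the orientation/sign bookkeeping you already flag.
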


Note that $\mathcal{K}b^t_J$ can be identified with the set of the classes of $\de$-closed $(2n-2)$-forms transverse to $C_{n-1,J}$. As
an application of a general fact, we get the following.

\begin{thm}[see {\cite[Theorem I.7]{sullivan}}]\label{thm:Kbt}
 Let $(M,J)$ be a $2n$-dimensional compact almost-complex manifold. Then $\mathcal{K}b^t_J$ is non-empty if and only if there
is no non-trivial $\de$-closed positive currents of bi-dimension $(n-1,n-1)$ that is a boundary, i.e.
$$
\mathcal{Z}\mathfrak{C}_{n-1,J}\cap\mathcal{B}=\{0\}\,.
$$
Furthermore, suppose that $0\not\in\mathcal{K}b^t_J$: then
 $\mathcal{K}b^t_J\subseteq H^{2n-2}_{dR}(M;\R)$ is the interior of the dual cone in $H^{2n-2}_{dR}(M;\R)$ of
$H\mathfrak{C}_{n-1,J}$.
\end{thm}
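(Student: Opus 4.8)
The plan is to deduce the statement from D.~Sullivan's duality between $\de$-closed transverse forms and structure cycles, \cite[Theorem I.7]{sullivan}; the real work is the translation of $\mathcal{K}b^t_J$ into that framework, the functional-analytic core being Sullivan's and only quoted.

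First I would fix the dictionary. A $\de$-closed real $(2n-2)$-form $\Phi$ is positive on the complex $(n-1)$-subspaces of $T^\C_xM$, for every $x\in M$, exactly when it is transverse to the cone structure $C_{n-1,J}$: by definition $C_{n-1,J}(x)$ is the convex cone generated by the $(2n-2)$-vectors carried, with their canonical orientation, by the complex $(n-1)$-subspaces, so a linear functional is positive on $C_{n-1,J}(x)\setminus\{0\}$ if and only if it is positive on each such generator. Hence $\mathcal{K}b^t_J$ is precisely the set of de Rham classes of $\de$-closed $(2n-2)$-forms transverse to $C_{n-1,J}$, as already remarked. I would then recall that $C_{n-1,J}$ is a genuine cone structure (a continuous field of compact convex cones), that $\mathcal{Z}\mathfrak{C}_{n-1,J}$ is its cone of structure cycles, and --- by the Lemma above together with the standard fact that conversely every positive current of bi-dimension $(n-1,n-1)$ is a structure current for $C_{n-1,J}$ --- that $\mathcal{Z}\mathfrak{C}_{n-1,J}$ is exactly the cone of $\de$-closed positive currents of bi-dimension $(n-1,n-1)$, while $\mathcal{B}$ is the cone of boundaries.

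With this translation the first assertion is immediate from \cite[Theorem I.7]{sullivan}: a cone structure on a compact manifold carries a $\de$-closed transverse form if and only if it has no non-trivial exact structure cycle, that is $\mathcal{Z}\mathfrak{C}_{n-1,J}\cap\mathcal{B}=\{0\}$; by the previous paragraph this says precisely that there is no non-trivial $\de$-closed positive current of bi-dimension $(n-1,n-1)$ which is a boundary. For the second assertion, suppose $0\notin\mathcal{K}b^t_J$. One inclusion is elementary: if $\Phi$ is a $\de$-closed $(2n-2)$-form transverse to $C_{n-1,J}$ and $T\in\mathcal{Z}\mathfrak{C}_{n-1,J}$ has Dirac directions $v(x)\in C_{n-1,J}(x)$, then $\left(\Phi,T\right)$ is the integral over $M$ of the pointwise pairings $\Phi_x(v(x))$, each of which is $\geq 0$ and is $>0$ wherever $v(x)\neq 0$; hence $\left(\Phi,T\right)\geq 0$, with equality only if $T=0$. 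Since $\Phi$ is closed and $T$ a cycle, this pairing depends only on $[\Phi]\in H^{2n-2}_{dR}(M;\R)$ and $[T]\in H_{2n-2}(M;\R)$, so $[\Phi]$ pairs strictly positively with $H\mathfrak{C}_{n-1,J}\setminus\{0\}$ and hence, $H\mathfrak{C}_{n-1,J}$ being a closed convex cone, $[\Phi]\in\interno\breve H\mathfrak{C}_{n-1,J}$; the hypothesis $0\notin\mathcal{K}b^t_J$ serves to rule out the degenerate situation $H\mathfrak{C}_{n-1,J}=\{0\}$ and thus to make this description of $\interno\breve H\mathfrak{C}_{n-1,J}$ the operative one.

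The reverse inclusion $\interno\breve H\mathfrak{C}_{n-1,J}\subseteq\mathcal{K}b^t_J$ is the substantive step and is exactly where Sullivan's argument is invoked: given a class $c$ pairing strictly positively with every non-zero structure cycle, one must exhibit a $\de$-closed $(2n-2)$-form representing $c$ and transverse to $C_{n-1,J}$. This is obtained by a Hahn--Banach separation in the space of currents, separating the compact convex cone $\mathcal{Z}\mathfrak{C}_{n-1,J}$ from the closed subspace of exact currents sitting inside the affine class $c$, and it is the point where compactness of the cone (guaranteed by \cite[III.4]{sullivan}) is essential. I expect this separation step to be the main obstacle of the original proof; for our purposes it is supplied by \cite[Theorem I.7]{sullivan}. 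Combining the two inclusions yields $\mathcal{K}b^t_J=\interno\breve H\mathfrak{C}_{n-1,J}$, which completes the proof.
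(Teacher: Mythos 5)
Your proposal is correct and follows essentially the same route as the paper: both reduce the statement to Sullivan's \cite[Theorem I.7]{sullivan} after identifying $\mathcal{K}b^t_J$ with the classes of $\de$-closed forms transverse to $C_{n-1,J}$ and $\mathcal{Z}\mathfrak{C}_{n-1,J}$ with the $\de$-closed positive currents of bi-dimension $(n-1,n-1)$. The only cosmetic difference is that the paper verifies the ``only if'' direction of the first claim by the explicit pairing computation $0<(\eta,\omega)=(\xi,\de\omega)=0$ rather than quoting it, and invokes I.7(iii) to read the hypothesis $0\notin\mathcal{K}b^t_J$ as guaranteeing non-trivial structure cycles, exactly as you observe.
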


\begin{proof}
 For the ``only if'' part, note that if $\omega\in\mathcal{K}b^t_J\neq\varnothing$ and $\eta$ is
a non-trivial $\de$-closed positive current of bi-dimension $(n-1,n-1)$ and a boundary (let $\eta=:\de\xi$), then
$$ 0 < \left(\eta,\pi^{n-1,n-1}\omega\right) = \left(\eta,\omega\right) = \left(\de\xi,\omega\right)
=\left(\xi,\de\omega\right)=0 $$
yields an absurd.\\
For the ``if part'', if no non-trivial $\de$-closed positive currents of bi-dimension $(n-1,n-1)$ that is a
boundary exists then, by \cite[Theorem I.7(ii)]{sullivan}, there exists a $\de$-closed form that is transverse to $C_{n-1,J}$,
that is, $\mathcal{K}b^t_J$ is non-empty.\\
Also the last statement follows from \cite[Theorem I.7(iv)]{sullivan}. Indeed, the
assumption $0\not\in\mathcal{K}b^t_J$ means that no $\de$-closed transverse form is cohomologous to zero therefore, by
\cite[Theorem I.7(iii)]{sullivan}, it assures the existence of non-trivial structure cycles.
\end{proof}

We give a similar characterization for $\mathcal{K}b^c_J$, see \cite{li-zhang}.
\begin{thm}\label{thm:Kbc}
 Let $(M,J)$ be a $2n$-dimensional compact almost-complex manifold. Suppose that $\mathcal{K}b^c_J\neq\varnothing$ and that $0\not\in\mathcal{K}b^c_J$. Then
 $\mathcal{K}b^c_J\subseteq H^{(n-1,n-1)}_J(M)_\R$ is the interior of the dual cone in $H_J^{(n-1,n-1)}(M)_\R$ of $H\mathfrak{C}_{n-1,J}$.
\end{thm}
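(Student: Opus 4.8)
The plan is to carry out, for the cone structure $C_{n-1,J}$, the Hahn--Banach argument of Sullivan that underlies Theorem \ref{thm:Kbt}, but inside the space of forms and currents of pure type/bi-dimension $(n-1,n-1)$; this is the $(2n-2)$-form counterpart of the argument of Li--Zhang for the compatible cone (\cite[Proposition 3.1]{li-zhang}, which treats $2$-forms). Writing $V:=H^{(n-1,n-1)}_J(M)_\R$, I would establish separately the two inclusions between $\mathcal{K}b^c_J$ and $\interno_V\!\left(\breve{H}\mathfrak{C}_{n-1,J}\cap V\right)$.

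For ``$\subseteq$'', take $[\Phi]\in\mathcal{K}b^c_J$, represented by a $\de$-closed real $(n-1,n-1)$-form $\Phi$ positive on the complex $(n-1)$-subspaces. Since every structure current in $\mathfrak{C}_{n-1,J}$ is a positive current of bi-dimension $(n-1,n-1)$, one has $(\Phi,T)>0$ for every nonzero $T\in\mathfrak{C}_{n-1,J}$; for a cycle $T$ this reads $([\Phi],[T])>0$, so $[\Phi]$ pairs strictly positively with $H\mathfrak{C}_{n-1,J}\setminus\{0\}$ (in particular $\mathcal{K}b^c_J\neq\varnothing$ already forces that no nontrivial structure cycle bounds). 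If $H\mathfrak{C}_{n-1,J}=\{0\}$ the dual cone is all of $H^{2n-2}_{dR}(M;\R)$ and there is nothing to prove; otherwise $H\mathfrak{C}_{n-1,J}$ has a compact base $B$, the pairing with $[\Phi]$ has a positive minimum on $B$, and adding to $\Phi$ any $\de$-closed $(n-1,n-1)$-form of sufficiently small $C^0$-norm keeps that pairing positive. Since $V$ is finite dimensional one may choose such a perturbation linearly and continuously as a function of the class, and conclude that a whole $V$-neighbourhood of $[\Phi]$ lies in $\breve{H}\mathfrak{C}_{n-1,J}\cap V$, that is, $[\Phi]\in\interno_V\!\left(\breve{H}\mathfrak{C}_{n-1,J}\cap V\right)$.

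For ``$\supseteq$'' I would argue by contradiction: if $[\Phi]$ lies in that relative interior but not in $\mathcal{K}b^c_J$, then the affine subspace $\mathcal{A}$ of $\de$-closed $(n-1,n-1)$-forms representing $[\Phi]$, a coset of the exact $(n-1,n-1)$-forms, is disjoint from the open convex cone $P\subseteq\wedge^{n-1,n-1}M$ of $(n-1,n-1)$-forms positive on the complex $(n-1)$-subspaces. Separating $P$ from $\mathcal{A}$ by Hahn--Banach in the Fr\'echet space $\wedge^{n-1,n-1}M$ yields a nonzero $(n-1,n-1)$-current $S$, regarded as a current of bi-dimension $(n-1,n-1)$ through $S(\omega):=S\!\left(\pi^{n-1,n-1}\omega\right)$, which is constant on $\mathcal{A}$ and, by the cone property of $P$, satisfies $(S,\cdot)\le0$ on $\overline{P}$ (the $(n-1,n-1)$-forms that are $\ge0$ on the complex $(n-1)$-subspaces); hence, by Sullivan's description of $\mathfrak{C}_{n-1,J}$ as the dual cone of those forms, $-S\in\mathfrak{C}_{n-1,J}\setminus\{0\}$, a positive current. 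If one also knows that $S$ is $\de$-closed, then $-S\in\mathcal{Z}\mathfrak{C}_{n-1,J}\setminus\{0\}$, while $([\Phi],[-S])=-\left(S,\Phi_0\right)\le0$ for every $\Phi_0\in\mathcal{A}$, and since $-S$ pairs strictly positively with the $\de$-closed transverse $(n-1,n-1)$-forms (of which there is at least one, as $\mathcal{K}b^c_J\neq\varnothing$) the class $[-S]$ does not annihilate $V$; this contradicts $[\Phi]\in\interno_V\!\left(\breve{H}\mathfrak{C}_{n-1,J}\cap V\right)$, so $[\Phi]\in\mathcal{K}b^c_J$. Equivalently, I could combine ``$\subseteq$'' with Theorem \ref{thm:Kbt} --- noting that $\mathcal{K}b^c_J\neq\varnothing$ forces $V$ to meet $\interno\breve{H}\mathfrak{C}_{n-1,J}$, so that the relative interior in $V$ coincides with $\mathcal{K}b^t_J\cap V$ --- and so reduce the whole statement to the identity $\mathcal{K}b^c_J=\mathcal{K}b^t_J\cap V$.

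The hard part will be precisely the clause ``$S$ is $\de$-closed''. In Theorem \ref{thm:Kbt} one separates against the full space $\de\wedge^{2n-3}M$ of exact $(2n-2)$-forms, so the separating current kills all boundaries and is automatically a cycle; but $\wedge^{n-1,n-1}M$ is not $\de$-stable, so separating inside it only forces $S$ to annihilate the exact $(n-1,n-1)$-forms, which does not give $\de S=0$, and enlarging $\mathcal{A}$ so as to recover closedness ruins its disjointness from $P$ (the $(n-1,n-1)$-component of a $\de$-closed transverse $(2n-2)$-form is transverse but no longer $\de$-closed). Reconciling the pure type $(n-1,n-1)$ with $\de$-closedness is exactly what Sullivan's theory of complex cone structures supplies (\cite{sullivan}, \S III; the analogue for positive $(p,p)$-currents of the Harvey--Lawson description), and it is this input on which the reverse inclusion really rests, together with the hypothesis $\mathcal{K}b^c_J\neq\varnothing$ (ensuring $P\neq\varnothing$ and that no nontrivial structure cycle bounds) and the hypothesis $0\notin\mathcal{K}b^c_J$, which here plays the same role as $0\notin\mathcal{K}b^t_J$ in Theorem \ref{thm:Kbt}, namely it guarantees the existence of nontrivial structure cycles.
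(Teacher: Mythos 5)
Your ``$\subseteq$'' inclusion is essentially the paper's (the paper states it in one line from $0\notin\mathcal{K}b^c_J$; your compact-base argument just makes the openness explicit), but the reverse inclusion contains exactly the gap you flag yourself, and it cannot be closed by an appeal to ``Sullivan's theory'': the $\de$-closedness of the separating object \emph{is} the content of the step. Separating the affine set $\mathcal{A}$ from the positive cone $P$ inside $\wedge^{n-1,n-1}M$ only forces the separating current $S$ to annihilate $\de\wedge^{2n-3}M\cap\wedge^{n-1,n-1}M$, which is strictly weaker than $\de S=0$, and (as you note) enlarging $\mathcal{A}$ destroys disjointness from $P$. Your fallback --- reducing everything to $\mathcal{K}b^c_J=\mathcal{K}b^t_J\cap H^{(n-1,n-1)}_J(M)_\R$ --- is circular: that identity is \eqref{eq:cono-1} of Theorem \ref{thm:coni-semiK}, which the paper \emph{deduces} from the present theorem together with Theorem \ref{thm:Kbt}, and proving it directly requires showing that a class with a closed transverse $(2n-2)$-form representative and of type $(n-1,n-1)$ has a closed transverse representative of pure type, i.e.\ the very statement at issue.

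The paper resolves this by running the Hahn--Banach separation in the dual direction, in the space of currents $\mathcal{D}'_{n-1,n-1}(M)_\R$ rather than in the space of forms. Given $e$ with $\left(e,H\mathfrak{C}_{n-1,J}\right)>0$, the isomorphism $\overline{\sigma}^{n-1,n-1}\colon H^{(n-1,n-1)}_J(M)_\R\stackrel{\simeq}{\to}\bigl(\overline{\pi_{n-1,n-1}\mathcal{Z}}/\overline{\pi_{n-1,n-1}\mathcal{B}}\bigr)^{*}$ of \cite[Proposition 2.4]{li-zhang} converts $e$ into a closed hyperplane $L\subseteq\overline{\pi_{n-1,n-1}\mathcal{Z}}$ which \emph{contains} $\overline{\pi_{n-1,n-1}\mathcal{B}}$ and misses $\mathfrak{C}_{n-1,J}\setminus\{0\}$; separating $L$ from the compact convex base $K=\{T\in\mathfrak{C}_{n-1,J}\st T(\varphi^{n-1})=1\}$ then yields a functional on currents, i.e.\ a real $(n-1,n-1)$-\emph{form} $\Phi$, positive on $K$ and vanishing on $\overline{\pi_{n-1,n-1}\mathcal{B}}$. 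The point you were missing lives here: for a form already of pure type $(n-1,n-1)$ one has $\left(\Phi,\de\alpha\right)=\left(\Phi,\pi_{n-1,n-1}\de\alpha\right)$ for every current $\alpha$, so annihilating the \emph{projected} boundaries is equivalent to $\de\Phi=0$. The compatibility of pure type with closedness is thus obtained on the form side (where projection commutes with the pairing in the needed way), not on the current side, and the extra input is Li--Zhang's isomorphism $\overline{\sigma}^{n-1,n-1}$ rather than a further theorem of \cite{sullivan}. Finally, since the separating hyperplane contains $L$, the restriction of $\Phi$ to $\overline{\pi_{n-1,n-1}\mathcal{Z}}$ is a positive multiple of $\overline{\sigma}^{n-1,n-1}(e)$, so $[\Phi]$ is a positive multiple of $e$ and $e\in\mathcal{K}b^c_J$ because $\mathcal{K}b^c_J$ is a cone.
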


\begin{proof}
 By the hypothesis $0\not\in\mathcal{K}^c_J$, we have that $\left(\mathcal{K}b^c_J,H\mathfrak{C}_{n-1,J}\right)>0$ and therefore the inclusion
 $\mathcal{K}b^c_J\subseteq\interno\breve{H}\mathfrak{C}_{n-1,J}$ holds.\\
 To prove the other inclusion, let $e\in H^{(n-1,n-1)}_J(M)_\R$ be an element in the interior of the dual cone in $H_J^{(n-1,n-1)}$ of $H\mathfrak{C}_{n-1,J}$, i.e. $e$ is such that $\left(e,H\mathfrak{C}_{n-1,J}\right)>0$.
 Consider the isomorphism
$$
\overline{\sigma}^{n-1,n-1}\colon
H^{(n-1,n-1)}_J(M)_\R \stackrel{\simeq}{\longrightarrow} \left(\frac{\overline{\pi_{n-1,n-1}\mathcal{Z}}}{\overline{\pi_{n-1,n-1}\mathcal{B}}}
\right)^{*}
$$
(see \cite[Proposition 2.4]{li-zhang}): $\overline{\sigma}^{n-1,n-1}(e)$ gives rise to a functional on $\frac{\overline{\pi_{n-1,n-1}\mathcal{Z}}}{\overline{\pi_{n-1,n-1}\mathcal{B}}}$, that is, to a functional on $\overline{\pi_{n-1,n-1}\mathcal{Z}}$ which vanishes on $\overline{\pi_{n-1,n-1}\mathcal{B}}$; such a functional, in turn, gives rise to a hyperplane $L$ in $\overline{\pi_{n-1,n-1}\mathcal{Z}}$ containing $\overline{\pi_{n-1,n-1}\mathcal{B}}$, which is closed in $\mathcal{D}'_{n-1,n-1}(M)_\R$ (since a kernel hyperplane in a closed set) and disjoint from $\mathfrak{C}_{n-1,J}\setminus\{0\}$ by the choice made for $e$.\newline
Pick a Hermitian metric and let $\varphi$ be its fundamental form; set
$$
K:=\{T\in \mathfrak{C}_{n-1,J}\st T(\varphi^{n-1})=1\}\,;
$$
then $K$ is a compact set.\newline
Now, in the space $\mathcal{D}'_{n-1,n-1}(M)_\R$ of
real
$(n-1,n-1)$-currents, consider the two sets $L$ (which is a closed subspace) and $K$ (which is a compact convex
non-empty set), which have empty intersection. By
the Hahn-Banach separation theorem, there exists a hyperplane containing $L$ (and then also
$\overline{\pi_{n-1,n-1}\mathcal{B}}$)
and disjoint from $K$. The functional on $\mathcal{D}'_{n-1,n-1}(M)_\R$ associated to this
hyperplane is a real $(n-1,n-1)$-form which
is $\de$-closed (since it vanishes on $\overline{\pi_{n-1,n-1}\mathcal{B}}$) and positive on the complex $(n-1)$-subspaces of $T^\C_xM$.
\end{proof}

A similar proof yields the following (see \cite{harvey-lawson}, \cite{li-zhang}).

\begin{thm}
 Let $(M,J)$ be a $2n$-dimensional compact almost-complex manifold.
\begin{enumerate}
 \item[(i)] Assuming that $J$ is integrable, there exists a K\"ahler metric if and only if
 $\mathfrak{C}_{1,J}\cap\pi_{1,1}\mathcal{B}=\{0\}$.
 \item[(ii)] There exists an almost-K\"ahler metric if and only if $\mathfrak{C}_{1,J}\cap \overline{\pi_{1,1}\mathcal{B}}=\{0\}$.
 \item[(iii)] There exists a semi-K\"ahler metric if and only if $\mathfrak{C}_{n-1,J}\cap\overline{\pi_{n-1,n-1}\mathcal{B}}=\{0\}$.
\end{enumerate}
\end{thm}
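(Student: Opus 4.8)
The plan is to derive all three statements from a single Hahn--Banach separation argument, of precisely the kind carried out in the proof of Theorem \ref{thm:Kbc}, varying only the cone structure and the bidegree. Put $p=1$ in cases (i) and (ii) and $p=n-1$ in case (iii), and work inside the space $\mathcal{D}'_{p,p}(M)_\R$ of real currents of bidimension $(p,p)$, which contains the compact convex cone $\mathfrak{C}_{p,J}$ of structure currents.

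The first step is to recast the existence of the metric as the existence of a $\de$-closed real $(p,p)$-form transverse to $C_{p,J}$. In case (ii), an almost-K\"ahler metric on $(M,J)$ is the same datum as a $\de$-closed real $(1,1)$-form $\omega$ positive on the complex lines of each $T^\C_x M$ (its fundamental form): such an $\omega$ is automatically non-degenerate, and $g(\sspace,\sspace):=\omega(\sspace,J\sspace)$ is then a $J$-compatible metric. In case (iii), by the Lemma recalled above (see \cite{silva}) a $\de$-closed real $(n-1,n-1)$-form $\Phi$ positive on the complex $(n-1)$-subspaces is exactly $\varphi^{n-1}$ for a $J$-taming real $(1,1)$-form $\varphi$, i.e. for a Hermitian fundamental form, and $\de\Phi=0$ is the semi-K\"ahler condition $\de(\varphi^{n-1})=0$; conversely a semi-K\"ahler $\varphi$ produces such a $\Phi$. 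In case (i), with $J$ integrable, a $\de$-closed positive real $(1,1)$-form is by definition a K\"ahler form.

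The second step is the duality that turns ``$\de$-closed'' into ``annihilates the projected exact currents''. A real $(p,p)$-form $\Psi$ pairs only with the bidimension-$(p,p)$ component of a current, so for every current $S$ of dimension $2p+1$ one has $(\de\Psi)(S)=\pm\,\Psi(\pi_{p,p}\de S)$; hence $\Psi$ is $\de$-closed if and only if it annihilates $\pi_{p,p}\mathcal{B}$, equivalently its closure $\overline{\pi_{p,p}\mathcal{B}}$. This already yields the ``only if'' implications: if the metric exists, the $(p,p)$-form $\Psi$ attached to it ($\omega$, resp. $\varphi^{n-1}$) is $\de$-closed and transverse to $C_{p,J}$, so it vanishes on $\overline{\pi_{p,p}\mathcal{B}}$ while it is strictly positive on $\mathfrak{C}_{p,J}\setminus\{0\}$ (structure currents being nonzero positive currents of bidimension $(p,p)$, as in the proof of Theorem \ref{thm:Kbt}); thus $\mathfrak{C}_{p,J}\cap\overline{\pi_{p,p}\mathcal{B}}=\{0\}$.

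For the converse implications I would argue verbatim as in the proof of Theorem \ref{thm:Kbc}: choosing a Hermitian fundamental form $\varphi$, the set $K:=\{T\in\mathfrak{C}_{p,J}\st T(\varphi^{p})=1\}$ is compact and convex with $0\notin K$, and since $\overline{\pi_{p,p}\mathcal{B}}$ is a closed linear subspace meeting $\mathfrak{C}_{p,J}$ only in $\{0\}$ it is disjoint from $K$; the Hahn--Banach separation theorem then furnishes a real $(p,p)$-form $\Psi$ vanishing on $\overline{\pi_{p,p}\mathcal{B}}$ — hence $\de$-closed, by the duality of the second step — and strictly positive on $K$, hence transverse to $C_{p,J}$. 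By the first step this $\Psi$ is an almost-K\"ahler form in case (ii) and equals $\varphi_0^{n-1}$ for a semi-K\"ahler form $\varphi_0$ in case (iii). Case (i) is the same argument with $p=1$ and $J$ integrable, the transverse $\de$-closed $(1,1)$-form produced being then a genuine K\"ahler form; the one point at which integrability is really used — and the step I expect to be the main obstacle — is that it guarantees $\pi_{1,1}\mathcal{B}$ is already a closed subspace of $\mathcal{D}'_{1,1}(M)_\R$, which is the substance of \cite{harvey-lawson} and rests on closed-range properties of the $\partial\overline{\partial}$-complex on a compact complex manifold, so that the closure may be dropped from the statement. In the non-integrable cases this closedness is not available, and keeping the closure in the statement is exactly what lets the same separation argument go through.
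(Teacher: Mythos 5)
Your proposal is correct and follows essentially the same route as the paper: the ``only if'' direction via the pairing of a closed transverse form against a limit of projected boundaries, and the ``if'' direction via Hahn--Banach separation of the compact convex set $K=\{T\in\mathfrak{C}_{p,J}\st T(\varphi^p)=1\}$ from the closed subspace $\overline{\pi_{p,p}\mathcal{B}}$, with case (i) reduced to case (ii) by the Harvey--Lawson fact that integrability makes $\pi_{1,1}\mathcal{B}$ closed. The only cosmetic difference is that you run the argument uniformly for all three cases, whereas the paper writes out only (iii) and cites \cite{harvey-lawson} for the rest.
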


\begin{proof}
 Note that {\itshape(i)} is a consequence of {\itshape(ii)}, since, if $J$ is integrable, then $J$ is closed
 (see \cite[Lemma 6]{harvey-lawson}), that is, $\pi_{1,1}\mathcal{B}$ is a closed set. We prove {\itshape(iii)},
 following closely the proof of {\itshape(i)} in \cite[Proposition 1.2, Theorem 1.4]{harvey-lawson}.\\
For the ``only if'' part, observe that if $\omega$ is a semi-K\"ahler form and
$$
0\neq \eta\in\mathfrak{C}_{n-1,J}\cap\overline{\pi_{n-1,n-1}\mathcal{B}}\neq\{0\}\,,
$$
suppose $\eta=\lim_{k\to+\infty} \pi_{n-1,n-1}\left(\de\alpha_k\right)$ for $\left(\alpha_k\right)_{k\in\N}\subseteq \mathcal{D}'_{2n-1}(M)$,
then we get an absurd, since
\begin{eqnarray*}
 0 &<& \left(\eta,\omega^{n-1}\right)=\left(\lim_{k\to+\infty}\pi_{n-1,n-1}\left(\de\alpha_k\right),\omega^{n-1}\right)\\[5pt]
&=&\lim_{k\to+\infty}\left(\de\alpha_k,\omega^{n-1}\right)=\lim_{k\to+\infty}\left(\alpha_k,\de\omega^{n-1}\right)=0 \;.
\end{eqnarray*}
For the ``if part'', fix again a Hermitian metric (we will call
$\varphi$ its fundamental form) and set $K:=\left\{T\in\mathfrak{C}_{n-1,J}\st T\left(\varphi^{n-1}\right)=1\right\}$; $K$ is a compact convex non-empty
set in $\mathcal{D}'_{n-1,n-1}(M)_\R$. By the Hahn-Banach separation theorem, we can find a hyperplane in
$\mathcal{D}'_{n-1,n-1}(M)_\R$ containing the closed subspace $\overline{\pi_{n-1,n-1}\mathcal{B}}$ and disjoint
from $K$; the real $(n-1,n-1)$-form associated to this hyperplane is then a $\de$-closed real $(n-1,n-1)$-form positive on the complex $(n-1)$-subspaces, that is, it gives a semi-K\"ahler form.
\end{proof}

Finally, we can prove a theorem similar to \cite[Proposition 3.1]{li-zhang} (which we quote in Theorem \ref{thm:coni-li-zhang}).

\begin{thm}\label{thm:coni-semiK}
 Let $(M,J)$ be a $2n$-dimensional compact almost-complex manifold. Assume
 that $\mathcal{K}b^c_J\neq\varnothing$ (that is, $(M,J)$
 is semi-K\"ahler) and that $0\not\in\mathcal{K}b^t_J$. Then
\begin{equation}\label{eq:cono-1}
\mathcal{K}b^t_J\cap H^{(n-1,n-1)}_J(M)_\R = \mathcal{K}b^c_J
\end{equation}
and
\begin{equation}\label{eq:cono-2}
\mathcal{K}b^c_J+H^{(n,n-2),(n-2,n)}_J(M)_\R \subseteq \mathcal{K}b^t_J \;.
\end{equation}
Moreover, if $J$ is \Cf\ at the $(2n-2)$-th stage, then
\begin{equation}\label{eq:cono-3}
\mathcal{K}b^c_J+H^{(n,n-2),(n-2,n)}_J(M)_\R = \mathcal{K}b^t_J \;.
\end{equation}
\end{thm}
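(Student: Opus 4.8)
The plan is to follow, step for step, the proof of Theorem \ref{thm:coni-li-zhang} from \cite{li-zhang}, replacing the cone structure $C_{1,J}$ of complex lines by the cone structure $C_{n-1,J}$ of complex $(n-1)$-subspaces and invoking Theorems \ref{thm:Kbt} and \ref{thm:Kbc} in place of the corresponding characterizations of the almost-K\"ahler cones. The one structural fact that drives everything is that every current in $\mathfrak{C}_{n-1,J}$ is a positive current of bi-dimension $(n-1,n-1)$, as recalled above; hence pairing such a current against a $(2n-2)$-form sees only the form's component of type $(n-1,n-1)$, and in particular it annihilates every real $(n,n-2)+(n-2,n)$-form. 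I would also record at once that, since $\mathcal{K}b^c_J$ is a subcone of $\mathcal{K}b^t_J$ and $0\notin\mathcal{K}b^t_J$ by hypothesis, also $0\notin\mathcal{K}b^c_J$, so that Theorem \ref{thm:Kbc} applies under the standing assumptions.

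For \eqref{eq:cono-1}, the inclusion $\mathcal{K}b^c_J\subseteq\mathcal{K}b^t_J\cap H^{(n-1,n-1)}_J(M)_\R$ is immediate from the definitions. Conversely, Theorem \ref{thm:Kbt} describes $\mathcal{K}b^t_J$ as the set of classes in $H^{2n-2}_{dR}(M;\R)$ pairing strictly positively with $H\mathfrak{C}_{n-1,J}\setminus\{0\}$, while Theorem \ref{thm:Kbc} describes $\mathcal{K}b^c_J$ as the set of classes in $H^{(n-1,n-1)}_J(M)_\R$ with the same property; intersecting the former description with $H^{(n-1,n-1)}_J(M)_\R$ produces exactly the latter.

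For \eqref{eq:cono-2}, take $[\Phi]\in\mathcal{K}b^c_J$ and $[\Psi]\in H^{(n,n-2),(n-2,n)}_J(M)_\R$, choosing a representative $\Psi$ that is real and of pure bidegree $(n,n-2)+(n-2,n)$. Since currents in $\mathfrak{C}_{n-1,J}$ have bi-dimension $(n-1,n-1)$, every structure cycle $T\in\mathcal{Z}\mathfrak{C}_{n-1,J}$ satisfies $(T,\Phi+\Psi)=(T,\Phi)$, so the classes $[\Phi]+[\Psi]$ and $[\Phi]$ have the same pairing with $H\mathfrak{C}_{n-1,J}$; since $[\Phi]$ pairs strictly positively with $H\mathfrak{C}_{n-1,J}\setminus\{0\}$ (Theorem \ref{thm:Kbc}), so does $[\Phi]+[\Psi]$, whence $[\Phi]+[\Psi]\in\mathcal{K}b^t_J$ by Theorem \ref{thm:Kbt}. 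For \eqref{eq:cono-3}, assume $J$ is \Cf\ at the $(2n-2)$-th stage; since the only bidegrees $(p,q)$ with $p+q=2n-2$ and $0\le p,q\le n$ are $(n,n-2)$, $(n-1,n-1)$ and $(n-2,n)$, this hypothesis reads
$$ H^{2n-2}_{dR}(M;\R)\;=\;H^{(n,n-2),(n-2,n)}_J(M)_\R\,+\,H^{(n-1,n-1)}_J(M)_\R\;. $$
Given $[\Omega]\in\mathcal{K}b^t_J$, decompose $[\Omega]=[\Psi]+[\Phi]$ accordingly, with $[\Psi]\in H^{(n,n-2),(n-2,n)}_J(M)_\R$ and $[\Phi]\in H^{(n-1,n-1)}_J(M)_\R$. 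As before, $(T,\Phi)=(T,\Omega)$ for every $T\in\mathcal{Z}\mathfrak{C}_{n-1,J}$, so $[\Phi]$ and $[\Omega]$ have the same pairing with $H\mathfrak{C}_{n-1,J}$; hence $[\Phi]$ pairs strictly positively with $H\mathfrak{C}_{n-1,J}\setminus\{0\}$, so $[\Phi]\in\mathcal{K}b^t_J$ by Theorem \ref{thm:Kbt}, and being of type $(n-1,n-1)$ it lies in $\mathcal{K}b^t_J\cap H^{(n-1,n-1)}_J(M)_\R=\mathcal{K}b^c_J$ by \eqref{eq:cono-1}. Thus $[\Omega]\in\mathcal{K}b^c_J+H^{(n,n-2),(n-2,n)}_J(M)_\R$, which together with \eqref{eq:cono-2} gives \eqref{eq:cono-3}.

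Granted Theorems \ref{thm:Kbt} and \ref{thm:Kbc}, the argument is essentially formal: the substantial work, namely Sullivan's separation machinery, has already been spent in proving those two theorems. The point that needs the most care here is matching up the two ``interior of the dual cone'' descriptions, one taken in $H^{2n-2}_{dR}(M;\R)$ and the other in the subspace $H^{(n-1,n-1)}_J(M)_\R$; for that reason I would phrase both through strict positivity of the de Rham pairing against $H\mathfrak{C}_{n-1,J}\setminus\{0\}$, which makes intersecting with $H^{(n-1,n-1)}_J(M)_\R$ transparent, and I would be careful to check the hypotheses feeding Theorem \ref{thm:Kbc}, in particular $0\notin\mathcal{K}b^c_J$. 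Finally, as in the almost-K\"ahler case of Theorem \ref{thm:coni-li-zhang}, it is worth keeping in mind that \eqref{eq:cono-2} is only an inclusion in general, the equality \eqref{eq:cono-3} being precisely what the \Cf ness at stage $2n-2$ buys.
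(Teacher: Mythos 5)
Your proof is correct and follows essentially the same route as the paper: \eqref{eq:cono-1} by matching the two dual-cone descriptions from Theorems \ref{thm:Kbt} and \ref{thm:Kbc}, and \eqref{eq:cono-3} by decomposing a class of $\mathcal{K}b^t_J$ via the \Cf ness at the $(2n-2)$-th stage and noting that the $(n-1,n-1)$-part has the same pairing with $H\mathfrak{C}_{n-1,J}$, hence lies in $\mathcal{K}b^t_J\cap H^{(n-1,n-1)}_J(M)_\R=\mathcal{K}b^c_J$. The only (immaterial) divergence is in \eqref{eq:cono-2}, where the paper argues directly that the sum of a semi-K\"ahler form and a $J$-anti-invariant $(2n-2)$-form is still $\de$-closed and positive on the complex $(n-1)$-subspaces, whereas you route this trivial inclusion through the duality with structure cycles.
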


\begin{proof}
Since $\mathcal{K}b^t_J$ is the interior of the dual cone in $H^{2n-2}_{dR}(M;\R)$ of $H\mathfrak{C}_{n-1,J}$ by Theorem \ref{thm:Kbt} and $\mathcal{K}b^c_J$ is the interior of the dual cone in $H_{(n-1,n-1)}^J(M)_\R$ of $H\mathfrak{C}_{n-1,J}$ by Theorem \ref{thm:Kbc}, the formula \eqref{eq:cono-1} is proved.\\
The inclusion \eqref{eq:cono-2} is easily proved, since the sum of a semi-K\"ahler form and a $J$-anti-invariant $(2n-2)$-form is again $\de$-closed and positive on the complex $(n-1)$-subspaces.\\
Finally, \eqref{eq:cono-3} follows from
\begin{eqnarray*}
\mathcal{K}b^t_J &=& \interno\breve{H}\mathfrak{C}_{n-1,J}=\interno\breve{H}\mathfrak{C}_{n-1,J} \cap H^2_{dR}(M;\R)\\[5pt]
&=&\interno\breve{H}\mathfrak{C}_{n-1,J} \cap \left(H^{(n-1,n-1)}_J(M)_\R+H^{(n,n-2),(n-2,n)}_J(M)_\R\right)\\[5pt]
&\subseteq& \mathcal{K}b^c_J + H^{(n,n-2),(n-2,n)}_J(M)_\R
\end{eqnarray*}
and from \eqref{eq:cono-2}.
\end{proof}

\section{A cohomological property for semi-K\"ahler manifolds}\label{sec:Cpf-and-pf}
In \cite[Theorem 4.1]{fino-tomassini}, it is proved that a $2n$-dimensional compact \Cpf\ almost-K\"ahler manifold
whose symplectic form $\omega$ satisfies the \emph{Hard Lefschetz Condition} (that is, $\omega^k:H^{n-k}_{dR}(M;\R)\to H^{n+k}_{dR}(M;\R)$
is an isomorphism for every $k\in\{0,\ldots,n\}$) is \pf, too (note that by \cite[Proposition 3.2]{fino-tomassini}, see
also \cite[Proposition 2.8]{draghici-li-zhang}, every compact almost-K\"ahler manifold is \Cp).

The previous result can be generalized a little. In order to look into the cohomology of balanced manifolds
and the duality between $H^{(\bullet,\bullet)}_J(M)$ and $H^J_{(\bullet,\bullet)}(M)$, we get the following.
\begin{prop}\label{prop:balanced-pf}
 Let $(M,J)$ be a $2n$-dimensional compact almost-complex manifold endowed with a semi-K\"ahler form $\omega$. Suppose that $\omega^{n-1}: H^1_{dR}(M;\R) \to H^{2n-1}_{dR}(M;\R)$ is an isomorphism. If $J$ is complex-\Cpf\ at the $1$-st stage, then it is also complex-\pf\ at the $1$-st stage and
$H^{(1,0)}_J(M)\simeq H_{(0,1)}^J(M)$.
\end{prop}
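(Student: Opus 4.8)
The plan is to transport the type decomposition of $H^1_{dR}(M;\C)$ onto $H_1(M;\C)$ through multiplication by $\omega^{n-1}$. Since $\omega$ is semi-K\"ahler, $\de\left(\omega^{n-1}\right)=0$, so $\alpha\mapsto\alpha\wedge\omega^{n-1}$ is a cochain map and induces $L_\omega\colon H^k_{dR}(M;\C)\to H^{k+2n-2}_{dR}(M;\C)$. Regarding a $\de$-closed $(2n-1)$-form as a $\de$-closed $1$-current identifies $H^{2n-1}_{dR}(M;\C)$ with $H_1(M;\C)$ (the inclusion of the complex of smooth forms into the complex of currents is a quasi-isomorphism), and, $\omega$ being real, the hypothesis that $\omega^{n-1}\colon H^1_{dR}(M;\R)\to H^{2n-1}_{dR}(M;\R)$ is an isomorphism then yields an isomorphism $L_\omega\colon H^1_{dR}(M;\C)\stackrel{\simeq}{\longrightarrow}H_1(M;\C)$. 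The point to exploit is that $L_\omega$ is type-compatible: if $\alpha$ is a $\de$-closed $(1,0)$-form, then $\alpha\wedge\omega^{n-1}$ is a $\de$-closed $(n,n-1)$-form, i.e. a $\de$-closed current of bi-dimension $(0,1)$, so $L_\omega\bigl(H^{(1,0)}_J(M)\bigr)\subseteq H^J_{(0,1)}(M)$, and symmetrically $L_\omega\bigl(H^{(0,1)}_J(M)\bigr)\subseteq H^J_{(1,0)}(M)$.

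First I would deduce that $J$ is complex-full at the $1$-st stage for currents. Applying the isomorphism $L_\omega$ to the identity $H^1_{dR}(M;\C)=H^{(1,0)}_J(M)+H^{(0,1)}_J(M)$, which holds because $J$ is complex-\Cf\ at the $1$-st stage, and using the two inclusions above, one gets $H_1(M;\C)=L_\omega\bigl(H^{(1,0)}_J(M)\bigr)+L_\omega\bigl(H^{(0,1)}_J(M)\bigr)\subseteq H^J_{(0,1)}(M)+H^J_{(1,0)}(M)$, hence $H_1(M;\C)=H^J_{(1,0)}(M)+H^J_{(0,1)}(M)$.

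Next I would show that $J$ is complex-pure at the $1$-st stage for currents, using only complex-\Cf ness and Poincar\'e duality. Let $[T]\in H^J_{(1,0)}(M)\cap H^J_{(0,1)}(M)$, and choose $\de$-closed currents $T'$ and $T''$, of bi-dimension $(1,0)$ and $(0,1)$ respectively, both representing $[T]$. If $\beta$ is a $\de$-closed $(1,0)$-form then $\langle[T],[\beta]\rangle=\langle T'',\beta\rangle=0$, since a current of bi-dimension $(0,1)$ annihilates forms whose bi-degree is not $(0,1)$; symmetrically $\langle[T],[\beta]\rangle=\langle T',\beta\rangle=0$ whenever $\beta$ is a $\de$-closed $(0,1)$-form. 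By complex-\Cf ness such classes span $H^1_{dR}(M;\C)$, so $[T]$ is orthogonal to all of $H^1_{dR}(M;\C)$ and thus $[T]=0$ because the pairing $H_1(M;\C)\times H^1_{dR}(M;\C)\to\C$ is non-degenerate. Combined with the previous paragraph this gives $H_1(M;\C)=H^J_{(1,0)}(M)\oplus H^J_{(0,1)}(M)$, that is, $J$ is complex-\pf\ at the $1$-st stage.

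Finally, restricting the injective map $L_\omega$ produces injections $H^{(1,0)}_J(M)\hookrightarrow H^J_{(0,1)}(M)$ and $H^{(0,1)}_J(M)\hookrightarrow H^J_{(1,0)}(M)$. Since $J$ is complex-\Cpf\ at the $1$-st stage, $b^1=\dim_\C H^{(1,0)}_J(M)+\dim_\C H^{(0,1)}_J(M)$, while the two previous steps give $\dim_\C H_1(M;\C)=\dim_\C H^J_{(1,0)}(M)+\dim_\C H^J_{(0,1)}(M)$; as $\dim_\C H_1(M;\C)=\dim_\C H^1_{dR}(M;\C)=b^1$, a termwise comparison of dimensions forces both injections to be surjective, and in particular $H^{(1,0)}_J(M)\simeq H^J_{(0,1)}(M)$. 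The delicate points I expect are the bi-dimension/bi-degree bookkeeping for currents and the identification $H^{2n-1}_{dR}(M;\C)\simeq H_1(M;\C)$; the genuinely non-obvious step, however, is the one before it, namely that complex-\Cf ness of the forms alone already forces complex-pureness of the currents.
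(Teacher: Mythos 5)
Your proposal is correct and follows essentially the same route as the paper: complex-pureness for currents is deduced from the fullness of the form decomposition via the non-degenerate pairing $H_1(M;\C)\times H^1_{dR}(M;\C)\to\C$, and the isomorphism $H^{(1,0)}_J(M)\simeq H^J_{(0,1)}(M)$ comes from the injection induced by $\varphi\mapsto\omega^{n-1}\wedge\varphi$ followed by integration against currents, closed off by a dimension count. You merely spell out the bi-dimension bookkeeping and the final comparison of dimensions more explicitly than the paper does.
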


\begin{proof}
 Firstly, note that $J$ is complex-\p\ at the $1$-st stage: indeed, if
$$
\mathfrak{a}\in H^J_{(1,0)}(M)\cap H^J_{(0,1)}(M)\,,
$$
then $\mathfrak{a}\lfloor_{H^{(1,0)}_J(M)}=0
=\mathfrak{a}\lfloor_{H^{(0,1)}_J(M)}$ and then $\mathfrak{a}=0$, since, by hypothesis,
$$
H^{1}_{dR}(M;\C)
=H^{(1,0)}_J(M)\oplus H^{(0,1)}_J(M)\,.
$$
Now, note that the map
$$
H^{(1,0)}_J(M) \stackrel{\omega^{n-1}\wedge\sspace}{\longrightarrow} H^{(n,n-1)}_J(M)
\stackrel{\varphi\mapsto\int_M\varphi\wedge\sspace}{\longrightarrow} H_{(0,1)}^J(M)
$$
induces an injection in cohomology $H^{(1,0)}_J(M)\hookrightarrow H^J_{(0,1)}(M)$; similarly, one has $H^{(0,1)}_J(M)\hookrightarrow H^J_{(1,0)}(M)$; since
$$
H^{(1,0)}_J(M)\oplus H^{(0,1)}_J(M)=H^1_{dR}(M;\C)\simeq H_1(M;\C)\,,
$$
we have the thesis.
\end{proof}

\begin{ex}
 The first example of a $6$-dimensional balanced manifold whose $\omega^2$ is an isomorphism in cohomology is the
\emph{Iwasawa manifold} $\mathbb{I}_3$, whose complex structure equations are
$$
\de\varphi^1=\de\varphi^2=0\,, \quad\de\varphi^3=-
\varphi^1\wedge\varphi^2\,,
$$
and endowed with the balanced form
$$
\omega:=\frac{\im}{2}(\varphi^1\wedge\bar\varphi^1+
\varphi^2\wedge\bar\varphi^2+\varphi^3\wedge\bar\varphi^3);
$$
its cohomology is
$$
H^1_{dR}(\mathbb{I}_3;\C)=\C\left\langle \varphi^1,\,\varphi^2,\,\bar\varphi^1,\,\bar\varphi^2 \right\rangle
$$
and
$$
H^5_{dR}(\mathbb{I}_3;\C)=\C\left\langle \varphi^{123\bar1\bar3},\,\varphi^{123\bar2\bar3},\,\varphi^{13\bar1\bar2\bar3},
\,\varphi^{23\bar1\bar2\bar3} \right\rangle\,,
$$
so $\omega^2\colon H^1_{dR}(\mathbb{I}_3;\C)\to H^5_{dR}(\mathbb{I}_3;\C)$ is an isomorphism. Therefore, $\mathbb{I}_3$ is complex-\Cpf\ at the $1$-st stage and complex-\pf\ at the $1$-st stage (note that, being $\varphi^1$, $\varphi^2$, $\bar\varphi^1$, $\bar\varphi^2$ the harmonic representatives, with respect to the metric $\sum_{i=1}^3\varphi^i\odot\bar\varphi^i$, of $H^1_{dR}\left(\mathbb{I}_3;\C\right)$, the result follows also from \cite[Theorem 3.7]{fino-tomassini}).
\end{ex}

 We also remark that the hypothesis that $\omega^{n-1}$ is an isomorphism is not
trivially satisfied. The following example gives a $6$-dimensional almost-complex manifold endowed
with a non-degenerate $J$-compatible $2$-form $\omega$,
such that $\de\omega^2=0$ but $\omega^2:H^1_{dR}(M;\R)\to H^5_{dR}(M;\R)$ is not an isomorphism.

\begin{ex}\label{ex:fino-tomassini}
Consider a nilmanifold $M=\Gamma\backslash G$, quotient of the simply-connected nilpotent Lie group whose associated Lie algebra $\mathfrak{g}$ has structure equations
$$ \left(0^4,\,12,\,13\right)\;. $$
In \cite{fino-tomassini}, the almost-complex structure $J'e^1:=-e^2,\,J'e^3:=-e^4,\,J'e^5:=-e^6$ is
considered as a first example of a compact non-\Cp\ manifold; note that it is not even \Cf: indeed, for example, one can prove that the cohomology class $\left[e^{15}+e^{16}\right]$ has neither $J'$-invariant nor $J'$-anti-invariant representatives, using an argument of invariance as in Lemma \ref{lemma:invariant}. Here, we consider the almost-complex structure
$$
J e^1 \;:=\; -e^5\;, \qquad J e^2 \;:=\; -e^3\;, \qquad Je^4 \;:=\; -e^6
$$
and the non-degenerate $J$-compatible $2$-form
$$
\omega:= e^{15}+e^{23}+e^{46}\,.
$$
A straightforward computation gives $\de\omega=-e^{134}\neq0$ and
$$
\de\omega^2=\de\left(e^{1235}-e^{1456}+e^{2346}\right)=0\,.
$$
We have that
$$
H^{1}_{dR}(M;\R)=\R\left\langle e^1,e^2,e^3,e^4 \right\rangle
$$
and
$$
\omega^2e^1=e^{12346}=\de e^{3456} \;.
$$
Therefore, $\omega^2\colon H^{1}_{dR}(M;\R)\to H^{5}_{dR}(M;\R)$ is not injective.
\end{ex}

We give two explicit examples of $2n$-dimensional balanced manifolds (with $2n=6$ and $2n=10$ respectively) whose $\omega^{n-1}$ is an isomorphism and with small balanced deformations.

\begin{ex}\label{ex:eta-beta-bilanciata}
Let $\eta\beta_5$ be the $10$-dimensional nilmanifold considered in \cite{alessandrini-bassanelli}
to prove that being $p$-K\"ahler is not a stable property under small
deformations of the complex structure. We recall that the complex structure equations are
$$
\left\{
\begin{array}{rcl}
 \de\varphi^1 &=& 0 \\[5pt]
 \de\varphi^2 &=& 0 \\[5pt]
 \de\varphi^3 &=& 0 \\[5pt]
 \de\varphi^4 &=& 0 \\[5pt]
 \de\varphi^5 &=& -\varphi^{12}-\varphi^{34}
\end{array}
\right. \;.
$$
The manifold $\eta\beta_5$ admits the balanced form
$$ \omega:=\frac{\im}{2}\sum_{h=1}^5\varphi^h\wedge\bar\varphi^h \;.$$
By Hattori's theorem (see \cite{hattori}), one computes
$$ H^1_{dR}(\eta\beta_5;\C)=\C\left\langle \varphi^1,\,\varphi^2,\,\varphi^3,\,\varphi^4,\,\bar\varphi^1,\,\bar\varphi^2,\,\bar\varphi^3,\,\bar\varphi^4\right\rangle $$
and
\begin{eqnarray*}
H^9_{dR}(\eta\beta_5;\C)&=&\C\left\langle \varphi^{12345\bar2\bar3\bar4\bar5},\,\varphi^{12345\bar1\bar3\bar4\bar5},\,\varphi^{12345\bar1\bar2\bar4\bar5},\,\varphi^{12345\bar1\bar2\bar3\bar5},\right.\\[5pt]
&&\left.\varphi^{2345\bar1\bar2\bar3\bar4\bar5},\,\varphi^{1345\bar1\bar2\bar3\bar4\bar5},\,\varphi^{1245\bar1\bar2\bar3\bar4\bar5},\,\varphi^{1235\bar1\bar2\bar3\bar4\bar5}\right\rangle \;;
\end{eqnarray*}
therefore, $\eta\beta_5$ is complex-\Cpf\ at the $1$-st stage with $\omega^4\colon H^1_{dR}\left(\eta\beta_5;\C\right)\to H^9_{dR}\left(\eta\beta_5;\C\right)$ an isomorphism and so it is also complex-\pf\ at the $1$-st stage by Proposition \ref{prop:balanced-pf} (note that, being the listed representatives harmonic with respect to the metric $\sum_{h=1}^5\varphi^h\odot\bar\varphi^h$, the same result follows from an argument similar to \cite[Theorem 3.7]{fino-tomassini}).
\smallskip

\noindent Now, consider the complex deformations defined by
$$
\left\{
\begin{array}{rcl}
 \varphi^1_t &:=& \varphi^1+t\,\bar\varphi^1 \\[5pt]
 \varphi^2_t &:=& \varphi^2 \\[5pt]
 \varphi^3_t &:=& \varphi^3 \\[5pt]
 \varphi^4_t &:=& \varphi^4 \\[5pt]
 \varphi^5_t &:=& \varphi^5
\end{array}
\right. \;,
$$
for $t\in\C$ small enough;
it is straightforward to compute
$$
\left\{
\begin{array}{rcl}
 \varphi^1 &=& \frac{1}{1-\left|t\right|^2}\,\left(\varphi^1_t-t\,\bar\varphi^1_t\right) \\[5pt]
 \varphi^2 &=& \varphi^2_t \\[5pt]
 \varphi^3 &=& \varphi^3_t \\[5pt]
 \varphi^4 &=& \varphi^4_t \\[5pt]
 \varphi^5 &=& \varphi^5_t
\end{array}
\right. \;;
$$
therefore, if $t$ is small enough, then the complex structure equations are
$$
\left\{
\begin{array}{rcl}
 \de\varphi^1_t &=& 0 \\[5pt]
 \de\varphi^2_t &=& 0 \\[5pt]
 \de\varphi^3_t &=& 0 \\[5pt]
 \de\varphi^4_t &=& 0 \\[5pt]
 \de\varphi^5_t &=& -\frac{1}{1-\left|t\right|^2}\,\varphi^{12}_t-\varphi^{34}_t-\frac{t}{1-\left|t\right|^2}\,
\varphi^{2\bar1}_t
\end{array}
\right. \;.
$$
The complex structures $J_t$ on $\eta\beta_5$ are balanced.\newline
Indeed, $\omega_t:=\frac{\im}{2}\displaystyle\sum_{h=1}^5\varphi^h_t\wedge\overline{\varphi}^h_t$ gives rise to a curve of
balanced structures on $\eta\beta_5$. Furthermore, $J_t$ are complex-\Cpf\ at the $1$-st stage and
$$
\omega_t^{4}: H^1_{dR}(\eta\beta_5;\R) \to H^{9}_{dR}(\eta\beta_5;\R)
$$
are isomorphisms. Therefore,
according to Proposition \ref{prop:balanced-pf}, it follows that the complex structures $J_t$ are
complex-\pf\ at the $1$-st stage and $H^{(1,0)}_{J_t}(M)\simeq H_{(0,1)}^{J_t}(M)$.
\end{ex}

\begin{ex}
 Consider a solvmanifold $M=\Gamma\backslash G$, quotient of the simply-connected completely-solvable Lie group $G$ whose associated Lie algebra $\mathfrak{g}$ has structure equations
$$ \left(0,\,-12,\,34,\,0,\,15,\,46\right) \;,$$
endowed with the almost-complex structure
$$
\left\{
\begin{array}{rcl}
 \varphi^1 &:=& e^1+\im e^4 \\[5pt]
 \varphi^2 &:=& e^2+\im e^5 \\[5pt]
 \varphi^3 &:=& e^3+\im e^6
\end{array}
\right.
$$
and the $J$-compatible symplectic structure
$$ \omega := e^{14}+e^{25}+e^{36} $$
(see also \cite[\S6.3]{fino-tomassini}).\\
The complex structure equations are
$$
\left\{
\begin{array}{rcl}
 \de\varphi^1 &=& 0 \\[5pt]
 2\,\de\varphi^2 &=& -\varphi^{1\bar2}-\varphi^{\bar1\bar2} \\[5pt]
 2\im \,\de\varphi^3 &=& -\varphi^{1\bar3}+\varphi^{\bar1\bar3}
\end{array}
\right. \;.
$$
By Hattori's theorem (see \cite{hattori}), it is straightforward to compute
$$ H^1_{dR}(M;\R)=\R\left\langle e^1,\,e^4\right\rangle\;,\qquad H^5_{dR}(M;\R)=\R\left\langle *e^1,\,*e^4 \right\rangle = \R\left\langle e^{23456},\, e^{12356} \right\rangle\;.$$
Now, consider the small almost-complex deformations of $M$ defined by
$$
\left\{
\begin{array}{rcl}
 \varphi^1_t &:=& \varphi^1 \\[5pt]
 \varphi^2_t &:=& \varphi^2+\im\,t\,e^6 \\[5pt]
 \varphi^3_t &:=& \varphi^3
\end{array}
\right.
$$
with the non-degenerate $J$-compatible $2$-form
$$ \omega_t := e^{14}+e^{25}+e^{36}+t\,e^{26} \;;$$
for $t\neq0$, one has that $\de\omega\neq 0$, but
$$ \omega_t^2=\omega_0^2-t\,e^{1246} $$
so
$$ \de\omega^2_t=0 \;.$$
Moreover:
$$ \omega_t^2\, e^1\;=\;e^{12356}\;,\qquad \omega_t^2\, e^4\;=\;e^{23456}\;.$$
\end{ex}

\section{The semi-continuity problem}\label{sec:semicontinuity}
T. Dr\v{a}ghici, T.-J. Li, W. Zhang proved in \cite{draghici-li-zhang1} that, given a family $\{J_t\}_t$ of (\Cpf)
almost-complex structures on a $4$-dimensional compact manifold $M$, the dimension
$$ h^+_{J_t}(M) \;:=\; \dim_\R H^{+}_{J_t}(M) $$
is a lower-semi-continuous function in $t$, and therefore the dimension
$$ h^-_{J_t}(M) \;:=\; \dim_\R H^{-}_{J_t}(M) $$
is an upper-semi-continuous function in $t$.

Their result is closely related to the geometry of four-manifolds. We are interested in establish
if a similar result could occur in dimension higher than $4$ assuming further hypotheses. But, first of all,
we provide several examples showing that things are more complicated in dimension higher than $4$.

In Example \ref{ex:etabeta} we construct a curve $\left\{J_t\right\}_t$ of almost-complex structures such that $h^-_{J_t}$ is not
upper-semi-continuous. Therefore, we have the following

\begin{prop}
 There exists a compact \Cpf\ complex manifold $(\eta\beta_5,\, J)$ of real dimension $10$ and there exists a curve
$\left\{J_t\right\}_t$ of almost-complex structures (which are non-\Cp\ for $t\neq 0$) with $J_0=J$ such that $h^-_{J_t}\left(\eta\beta_5\right)$
is not an upper-semi-continuous function in $t$.
\end{prop}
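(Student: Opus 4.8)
The proof consists of the explicit construction carried out in Example \ref{ex:etabeta}, together with the elementary remark that it produces a failure of upper-semi-continuity. In detail, recall that $\eta\beta_5=\Gamma\backslash G$ is the $10$-dimensional nilmanifold carrying the invariant complex structure $J=J_0$ with $\de\varphi^1=\cdots=\de\varphi^4=0$ and $\de\varphi^5=-\varphi^{12}-\varphi^{34}$. The plan is to proceed in three steps: (i) check that $(\eta\beta_5,J_0)$ is \Cpf\ and record $h^-_{J_0}$; (ii) exhibit a curve $\{J_t\}_t$ of almost-complex structures through $J_0$ which are non-\Cp\ for $t\neq0$; (iii) show $h^-_{J_t}>h^-_{J_0}$ for all small $t\neq0$. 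Since an upper-semi-continuous function $f$ satisfies $f(0)\geq\limsup_{t\to0}f(t)$, step (iii) already implies that $t\mapsto h^-_{J_t}$ is not upper-semi-continuous at $0$, which is the assertion.

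For (i): $\eta\beta_5$ being a nilmanifold and $J_0$ invariant, Nomizu's theorem computes $H^\bullet_{dR}(\eta\beta_5;\C)$ from invariant forms, and the symmetrization argument of Lemma \ref{lemma:invariant} allows one to use invariant representatives throughout. Inspecting the invariant $\de$-closed $2$-forms modulo exact ones, the $\de$-closed invariant forms of type $(2,0)$ for $J_0$ are spanned by the $\varphi^{ij}$ with $i,j\leq4$ (the $\varphi^{i5}$ are not $\de$-closed), the only exact one among them being $\varphi^{12}+\varphi^{34}=-\de\varphi^5$, while the $\de$-closed invariant forms of type $(1,1)$ are spanned by the $\varphi^{i\bar j}$ with $i,j\leq4$ and none of these is exact. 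Counting, $H^{(2,0)}_{J_0}(\eta\beta_5)$, $H^{(1,1)}_{J_0}(\eta\beta_5)$, $H^{(0,2)}_{J_0}(\eta\beta_5)$ have dimensions $5$, $16$, $5$ and are in direct sum inside $H^2_{dR}(\eta\beta_5;\C)$ (which thus has dimension $26$); in particular $J_0$ is complex-\Cpf\ at the $2$-nd stage, hence \Cpf, and $h^-_{J_0}=10$.

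For (ii)--(iii): take $\varphi^1_t:=\varphi^1+t\,\bar\varphi^2$ and $\varphi^i_t:=\varphi^i$ for $i\in\{2,\dots,5\}$; the corresponding change of coframe has nonzero determinant, so this is a genuine curve through $J_0$ of invariant (in fact integrable) almost-complex structures, with structure equations $\de\varphi^5_t=-\varphi^{12}_t-\varphi^{34}_t-t\,\varphi^{2\bar2}_t$, having trivial component of type $(0,2)$ for $J_t$. The crucial point is that $\varphi^1_t\wedge\varphi^2_t=\varphi^{12}-t\,\varphi^{2\bar2}$ is $\de$-closed of type $(2,0)$ for $J_t$, so its de Rham class $[\varphi^{12}]-t\,[\varphi^{2\bar2}]$ lies in $H^{(2,0),(0,2)}_{J_t}(\eta\beta_5)$; but $[\varphi^{12}]\in H^{(2,0),(0,2)}_{J_0}(\eta\beta_5)$ while $[\varphi^{2\bar2}]$ is a nonzero element of $H^{(1,1)}_{J_0}(\eta\beta_5)$, and these two subspaces of $H^2_{dR}(\eta\beta_5;\C)$ meet only in $0$ (they are the complexifications of $H^-_{J_0}$ and $H^+_{J_0}$, which intersect trivially because $J_0$ is \Cp); hence $[\varphi^{12}]-t\,[\varphi^{2\bar2}]\notin H^{(2,0),(0,2)}_{J_0}(\eta\beta_5)$ for $t\neq0$. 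Computing $H^{(2,0),(0,2)}_{J_t}(\eta\beta_5)$ explicitly (a finite-dimensional problem, by Lemma \ref{lemma:invariant}) in a fixed basis of $H^2_{dR}(\eta\beta_5;\C)$ then shows $h^-_{J_t}=h^-_{J_0}+1$ for every small $t\neq0$, while a parallel count gives $h^+_{J_t}=h^+_{J_0}$; since $J_0$ is \Cpf\ one has $h^+_{J_0}+h^-_{J_0}=\dim_\R H^2_{dR}(\eta\beta_5;\R)$, so $h^+_{J_t}+h^-_{J_t}>\dim_\R H^2_{dR}(\eta\beta_5;\R)$, forcing $H^+_{J_t}\cap H^-_{J_t}\neq\{0\}$, i.e.\ $J_t$ non-\Cp\ for $t\neq0$. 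As $h^-_{J_t}$ is thus strictly larger than $h^-_{J_0}$ on a punctured neighbourhood of $0$, the function $t\mapsto h^-_{J_t}$ is not upper-semi-continuous at $0$, which proves the Proposition.

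The main difficulty is the bookkeeping of step (iii): one must fix an explicit basis of $H^2_{dR}(\eta\beta_5;\C)$, determine which invariant forms of type $(2,0)$ for $J_t$ are $\de$-closed, express their classes in that basis, and verify that exactly one new class enters $H^{(2,0),(0,2)}_{J_t}$ while none leaves. A secondary, routine point is to check that $J_t$ is invariant, so that Lemma \ref{lemma:invariant} applies for $t\neq0$ and the computation descends to the Lie algebra. Nothing here is conceptually deep, but it is where all the computational content lies.
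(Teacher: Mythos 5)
Your proposal is correct and follows essentially the same route as the paper: Example \ref{ex:etabeta} uses the deformation $\varphi^1_t=\varphi^1+t\,\bar\varphi^1$, so that $\de\varphi^5_t$ acquires the $(1,1)$-term $-\frac{t}{1-\left|t\right|^2}\,\varphi^{2\bar1}_t$, making $[\varphi^{12}_t]$ and $[\varphi^{34}_t]$ independent anti-invariant classes and giving $h^-_{J_t}\geq 12>10=h^-_{J_0}$, while you use $\varphi^1_t=\varphi^1+t\,\bar\varphi^2$, which triggers the identical mechanism through the $(1,1)$-term $-t\,\varphi^{2\bar2}_t$. The only stylistic difference is that the paper exhibits non-\Cp ness directly via the nonzero class $\left[-\frac{1}{1-\left|t\right|^2}\,\varphi^{12}_t-\varphi^{34}_t\right]=\left[\frac{t}{1-\left|t\right|^2}\,\varphi^{2\bar1}_t\right]\in H^+_{J_t}\cap H^-_{J_t}$ instead of your dimension count $h^+_{J_t}+h^-_{J_t}>b^2$.
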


\begin{ex}\label{ex:etabeta}
Consider again the nilmanifold $\eta\beta_5$, see Example \ref{ex:eta-beta-bilanciata}.\\
By Nomizu's theorem (see \cite{nomizu}), it is straightforward to compute
$$
\begin{array}{lll}
 H^2_{dR}(\eta\beta_5;\,\C) &=& \C\left\langle \varphi^{13},\,
\varphi^{14},\,\varphi^{23},\,\varphi^{24},\, \varphi^{\bar1\bar3},\,\varphi^{\bar1\bar4},\,
\varphi^{\bar2\bar3},\,\varphi^{\bar2\bar4},\, \varphi^{12-34},\,\varphi^{\bar1\bar2-\bar3\bar4}
\right\rangle\\[7pt]
&&\oplus\; \C\left\langle \varphi^{1\bar1},\,\varphi^{1\bar2},\,\varphi^{1\bar3},\,\varphi^{1\bar4},\,
\varphi^{2\bar1},\,\varphi^{2\bar2},\,\varphi^{2\bar3},\,\varphi^{2\bar4},\right.\\[5pt]
 && \left.\varphi^{3\bar1},\,\varphi^{3\bar2},\,\varphi^{3\bar3},\,\varphi^{3\bar4},\, \varphi^{4\bar1},\,\varphi^{4\bar2},\,\varphi^{4\bar3},\,\varphi^{4\bar4}
\right\rangle \;,
\end{array}
$$
so one notes immediately that $\eta\beta_5$ is a \Cpf\ complex manifold with
$$ h^-_J\left(\eta\beta_5\right) \;=\; 10\;,\qquad\qquad h^+_J\left(\eta\beta_5\right)\;=\;16\;.$$
Now, consider the curve of complex structures already considered in Example \ref{ex:eta-beta-bilanciata}.\\
For $t\neq 0$ small enough, the complex structures $J_t$ are not \Cp: indeed, one has
$$ H^+_{J_t}(\eta\beta_5) \;\ni\;
\left[\frac{t}{1-\left|t\right|^2}\,\varphi^{2\bar1}_t+\de\varphi^5_t\right] \;=\; \left[-\frac{1}{1-\left|t\right|^2}\,
\varphi^{12}_t-\varphi^{34}_t\right]\;\in\; H^-_{J_t} \;, $$
where $\left[\frac{t}{1-\left|t\right|^2}\,\varphi^{2\bar1}_t\right]$ is a non-zero cohomology class (again by Nomizu's theorem).
Moreover, one has
$$
H^-_{J_t}(\eta\beta_5) \;\supseteq \; \C\left\langle
\varphi^{13}_t,\,\varphi^{14}_t,\,\varphi^{23}_t,\,\varphi^{24}_t,\, \varphi^{\bar1\bar3}_t,\,\varphi^{\bar1\bar4}_t,
\,\varphi^{\bar2\bar3}_t,\,\varphi^{\bar2\bar4}_t,\, \varphi^{12}_t,\, \varphi^{34}_t,\, \varphi^{\bar1\bar2}_t,\,
\varphi^{\bar3\bar4}_t
\right\rangle \;,
$$
so (for $t\neq0$ small enough)
$$
 h^-_{J_0} \;=\; 10 \;<\; 12 \;\leq\; h^-_{J_t}
$$
hence $h^-_{J_t}$ is not an upper-semi-continuous functions in $t=0$.
\end{ex}

We provide also Example \ref{ex:s3t3}, which shows that the lower-semi-continuity of $h^+_{J_t}$ can fail. Therefore, we have the following result.

\begin{prop}
 Consider the $6$-dimensional compact manifold $\mathbb{S}^3\times\T^3$. It admits a \Cf\ complex structure $J$ and a curve
$\left\{J_t\right\}_t$ of almost-complex structures (which are not \Cp) such that $J_0=J$ and $h^+_{J_t}$
is not a lower-semi-continuous function in $t$.
\end{prop}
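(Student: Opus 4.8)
The plan is to realize $M=\mathbb{S}^3\times\mathbb{T}^3$ as the compact connected Lie group $\mathrm{SU}(2)\times\mathbb{T}^3$ and to carry out the whole discussion with left-invariant forms, the computation being the one of Example \ref{ex:s3t3}. First I would fix a left-invariant coframe $\left\{e^1,\dots,e^6\right\}$ compatible with the product, with Maurer--Cartan structure equations
$$\de e^1=e^{23}\,,\quad\de e^2=e^{31}\,,\quad\de e^3=e^{12}\,,\quad\de e^4=\de e^5=\de e^6=0$$
(here $e^{ij}:=e^i\wedge e^j$, the first three $e^i$ forming a coframe on the $\mathbb{S}^3$-factor and $e^4,e^5,e^6$ a closed coframe on $\mathbb{T}^3$). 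Since $\mathrm{SU}(2)\times\mathbb{T}^3$ is a compact connected Lie group, averaging a form over left translations commutes with $\de$ and is $\de$-homotopic to the identity, so the de Rham cohomology is computed by the invariant (Chevalley--Eilenberg) subcomplex; in particular $H^1_{dR}(M;\R)=\R\left\langle e^4,e^5,e^6\right\rangle$, $H^2_{dR}(M;\R)=\R\left\langle e^{45},e^{46},e^{56}\right\rangle$, and $e^{12},e^{13},e^{23}$ are exact. For $J=J_0$ I would take the Calabi--Eckmann-type structure obtained by viewing $M$ as the product of the Hopf surface $\mathbb{S}^3\times\mathbb{S}^1$ with the elliptic curve $\mathbb{T}^2$, i.e.\ the left-invariant almost-complex structure whose $(1,0)$-coframe is $\varphi^1=e^1+\im e^2$, $\varphi^2=e^3+\im e^4$, $\varphi^3=e^5+\im e^6$; the resulting structure equations for $\varphi^1,\varphi^2,\varphi^3$ involve only forms of type $(2,0)$ and $(1,1)$, so $J$ is integrable.

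The second step is the computation of $H^+_J$ and $H^-_J$, hence of $h^+_J$ and of the \Cf\ decomposition of $H^2_{dR}$. As $J$ is left-invariant, averaging over $\mathrm{SU}(2)\times\mathbb{T}^3$ commutes with $J^\ast$ as well, so it preserves the bidegree; thus the argument of Lemma \ref{lemma:invariant} applies and $H^+_J$ (resp.\ $H^-_J$) is generated by the classes of the invariant $\de$-closed forms of type $(1,1)$ (resp.\ of type $(2,0)+(0,2)$), taken modulo the invariant exact forms $\left\langle e^{12},e^{13},e^{23}\right\rangle$. Testing the finitely many invariant $2$-forms of each type against the structure equations of the previous step then yields $H^+_J\oplus H^-_J=H^2_{dR}(M;\R)$, i.e.\ that $J$ is \Cf, together with the explicit value of $h^+_J$.

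Next I would introduce the curve $\left\{J_t\right\}_t$ by a small left-invariant perturbation of the $(1,0)$-coframe, in the style of Example \ref{ex:etabeta}: set $\varphi^j_t:=\varphi^j+t\,\bar\varphi^k$ for a suitable pair of indices (so that $J_0=J$ and, for $|t|$ small, $J_t$ is a genuine almost-complex structure), invert the relation to express the $\varphi^j$ in terms of the $\varphi^j_t$, and rewrite the structure equations in the new coframe. That $J_t$ is not \Cp\ for $t\neq0$ is then checked exactly as in Example \ref{ex:etabeta}: one reads off from the perturbed structure equations a non-zero cohomology class possessing both a $J_t$-invariant and a $J_t$-anti-invariant representative, so that $H^+_{J_t}\cap H^-_{J_t}\neq\{[0]\}$, the non-triviality of the class following again from the invariance reduction. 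Finally, repeating the computation of the second step for $J_t$ and comparing, one obtains $h^+_{J_t}<h^+_{J_0}$ for every $t\neq0$ small enough, hence $\liminf_{t\to0}h^+_{J_t}<h^+_{J_0}$: this is exactly the failure of lower-semi-continuity of $h^+$ at $t=0$, which is the content of the Proposition.

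The main obstacle is the $H^\pm$-computation for the deformed structures. For $J_0$ the coframe splits into $J_0$-complex lines and the bidegrees are transparent, but $J_t$ mixes the $e^i$, so one has to determine precisely which $J_0$-invariant classes lose their $J_t$-invariant representative and, more delicately, to exclude that new $J_t$-invariant classes appear in $H^2_{dR}$; here the reduction to invariant representatives---legitimate because $\mathrm{SU}(2)\times\mathbb{T}^3$ is compact, so the averaging is honest rather than a Nomizu-type statement---is what makes the count feasible. Positivity of forms plays no role throughout: only the integers $h^\pm_{J_t}$ enter.
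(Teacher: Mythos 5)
Your overall strategy --- reduce to left-invariant forms by averaging over the compact connected Lie group $\mathrm{SU}(2)\times\T^3$, compute $H^{\pm}$ on the invariant complex, then deform the $(1,0)$-coframe --- is exactly the paper's. The gap is in your choice of $J_0$: with $\varphi^1=e^1+\im e^2$, $\varphi^2=e^3+\im e^4$, $\varphi^3=e^5+\im e^6$ the structure is indeed integrable, but it is \emph{not} \Cf\ and has $h^+_{J}=1$, so the computation you announce in your second step (``$H^+_J\oplus H^-_J=H^2_{dR}$'') comes out wrong. Concretely, the closed invariant $2$-forms are $e^{12},e^{13},e^{23}$ (exact) and $e^{45},e^{46},e^{56}$; for your $J$ one has $e^{56}=\frac{\im}{2}\varphi^{3\bar3}$, so $\left[e^{56}\right]\in H^+_J$, but the $(2,0)+(0,2)$-component of $e^{45}$ is $\frac12\left(e^{45}+e^{36}\right)$, while the $(2,0)+(0,2)$-components of $e^{12},e^{13},e^{23}$ lie in $\Span\left\{e^{13}-e^{24},\,e^{14}+e^{23}\right\}$. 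Hence no element of $e^{45}+\left\langle e^{12},e^{13},e^{23}\right\rangle$ is of pure type, and by your own averaging reduction $\left[e^{45}\right]$ (and likewise $\left[e^{46}\right]$) has neither a $J$-invariant nor a $J$-anti-invariant representative. Therefore $H^+_J=\R\left\langle\left[e^{56}\right]\right\rangle$, $H^-_J=\{0\}$, and $H^+_J+H^-_J\subsetneq H^2_{dR}(M;\R)$: neither the \Cf ness nor the value $h^+_{J_0}=3$ needed for the drop can be extracted from this structure.

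The paper takes instead $\varphi^i=e^i+\im e^{i+3}$ for $i=1,2,3$, pairing each $\mathfrak{su}(2)$-direction with a flat torus direction. This $J$ is \emph{not} integrable ($\de\varphi^1=e^{23}$ has $(0,2)$-part $\frac14\varphi^{\bar2\bar3}\neq0$; the word ``complex'' in the Proposition is a slip for ``almost-complex'', so your instinct to look for an integrable $J$ led you astray), but precisely because $e^{12},e^{13},e^{23}$ are exact, every generator of $H^2_{dR}$ then has both an invariant and an anti-invariant representative: for instance $2e^{45}=\left(\varphi^{1\bar2}-\varphi^{2\bar1}\right)-2\,\de e^3=-\left(\varphi^{12}+\varphi^{\bar1\bar2}\right)+2\,\de e^3$. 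Hence $H^+_J=H^-_J=H^2_{dR}(M;\R)$, so $J$ is \Cf\ (and already non-\Cp) with $h^+_J=3$, and the deformation $\varphi^1_t=\varphi^1+t\,\bar\varphi^1$ kills the $J_t$-invariant representatives of $\left[e^{45}\right]$ and $\left[e^{46}\right]$ for $t\notin\R$ (the term $\bar t\,\varphi^{12}_t-t\,\varphi^{\bar1\bar2}_t$ is not the sum of a $J_t$-invariant and an exact form, checked on invariant forms), giving $h^+_{J_t}=1<3$. With your $J_0$ you would instead have to produce a deformation with $h^+_{J_t}=0$, and the \Cf\ assertion would still fail at $t=0$. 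The rest of your outline (averaging, invariance reduction, explicit comparison) is sound and is what the paper does; only the structure $J_0$ needs to be replaced.
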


\begin{ex}\label{ex:s3t3}
 Consider $\mathbb{S}^3\times\T^3$, whose structure equations (with respect to the global coframe $\left\{e^j\right\}_{j\in\{1,\ldots,6\}}$) are
$$ \left( 23, \, -13, \, 12, \, 0^3 \right) $$
and with the almost-complex structure $J$ defined by
$$
\left\{
\begin{array}{rcl}
 \varphi^1 &:=& e^1+\im e^4 \\[5pt]
 \varphi^2 &:=& e^2+\im e^5 \\[5pt]
 \varphi^3 &:=& e^3+\im e^6
\end{array}
\right. \;.
$$
We have that
\begin{eqnarray*}
H^2_{dR}(\mathbb{S}^3\times\T^3;\C)&=&\C\left\langle \left[e^{45}\right],\, \left[e^{46}\right],\, \left[e^{56}\right] \right\rangle \\[5pt]
&=&  \left\langle \left[\varphi^{12}+\varphi^{\bar1\bar2}\right],\,\left[\varphi^{13}+\varphi^{\bar1\bar3}\right],\, \left[\varphi^{23}+\varphi^{\bar2\bar3}\right] \right\rangle =H^{-}_{J}\\[5pt]
&=&  \left\langle \left[\varphi^{1\bar2}-\varphi^{2\bar1}\right],\,\left[\varphi^{1\bar3}-\varphi^{3\bar1}\right],\, \left[\varphi^{2\bar3}-\varphi^{3\bar2}\right] \right\rangle =H^{+}_{J} \;.
\end{eqnarray*}
Consider the following small deformations of $J$:
$$
\left\{
\begin{array}{rcl}
 \varphi^1_t &:=& \varphi^1 + t \,\bar\varphi^1 \\[5pt]
 \varphi^2_t &:=& \varphi^2 \\[5pt]
 \varphi^3_t &:=& \varphi^3
\end{array}
\right.
$$
for $t\in\C$ small enough.\\
We have that
$$ \varphi^{1\bar2}-\varphi^{2\bar1}=\frac{1}{1-|t|^2}\left(\varphi_t^{1\bar2}-\varphi_t^{2\bar1}\right)+
\frac{1}{1-|t|^2}\left(\bar t\,\varphi_t^{12}-t\,\varphi_t^{\bar1\bar2}\right) \;:$$
we claim that the term $\psi:=\bar t\,\varphi_t^{12}-t\,\varphi_t^{\bar1\bar2}$ can not be
written as the sum of a $J_t$-invariant form and an exact form. Indeed, being $\psi$ invariant, taking
the average, we can suppose that the $J_t$-anti-invariant component of the exact term is the differential of an invariant
form (see Lemma \ref{lemma:invariant} for a similar argument). It is easy to check that this is not possible if $t\not\in\R$. Therefore (since the same argument can be repeated for $\varphi^{1\bar3}+\varphi^{\bar13}$) we have that, for $t\not\in\R$
small enough,
$$ h^+_{J_t}=1 <3 = h^+_{J_0} \;,$$
that is, $h^+_{J_t}$ is not a lower-semi-continuous function in $t=0$.
\end{ex}

These examples force us to consider stronger conditions under which semi-continuity could maybe occur.

\medskip

Now we turn to the aim to give a more precise statement of the problem. We remark that, in dimension $4$,
we does not have only the semi-continuity property, but we have also that each $J$-invariant class has a
$J_t$-invariant class {\em close to it}. This is also a sufficient condition to assure that, if $\alpha$
is a $J$-compatible symplectic structure, there is a $J_t$-compatible symplectic structure $\alpha_t$ for
$t$ small enough. Therefore, we are interested in the following problem:\\[6pt]
\noindent{\itshape
 Let $(M,J)$ be a compact almost-complex manifold with
$$
H^+_J \;=\; \C\left\langle \left[\alpha^1\right],\,\ldots,\,\left[\alpha^k\right]\right\rangle \;,
$$
where $\alpha^1,\,\ldots,\,\alpha^k$ are forms of type $(1,1)$ with respect to $J$. We look for further
hypotheses assuring that all the almost-complex structures of the curve $\left\{J_t\right\}_t$ (for $t$ small enough) have
$$ H^+_{J_t} \;\supseteq\; \C\left\langle \left[\alpha^1_t\right],\,\ldots,\,\left[\alpha^k_t\right]\right\rangle $$
with $\alpha^j_t=\alpha^j+\opiccolo{1}$. In particular, $h^+_{J_t}$ is a lower-semi-continuous function in $t=0$.
}\\

We have the following.

\begin{prop}
 Let $(M,\,J)$ be a compact almost-complex manifold with
$$ H^+_J(M) \;=\; \C\left\langle \left[\alpha^1\right],\,\ldots,\,\left[\alpha^r\right]\right\rangle \;,$$
where $\alpha^1,\ldots,\alpha^r\in\wedge^{1,1}_J(M)\cap\wedge^2(M)$.
Take $L\in\textrm{End}(TM)$ and consider the curve of almost-complex structures defined by
\begin{equation}\label{eq:Jt}
J_t \;:=\; \left(\id-t\,L\right)\,J\,\left(\id-t\,L\right)^{-1}
\end{equation}
for $t$ small enough (see \cite{audin-lafontaine}),
and this representation is unique if we restrict to $L\in T^{1,0}_JM\otimes T^{*\,0,1}_JM$
(i.e., $L$ such that $LJ+JL=0$). \newline
Then, for each $\left[\alpha\right]\in H^+_J(M)$ with $\alpha\in\wedge^{1,1}_{J}(M)\cap\wedge^2M$,
there exists a real $2$-form $\eta_t=\alpha+\opiccolo{1}\in \wedge^{1,1}_{J_t}(M)\cap\wedge^2M$ such that
$\de\eta_t=0$ if and only if the following holds: there exists
$\left\{\beta_j\right\}_{j\in\N\setminus\{0\}}\subseteq \wedge^2M$ solution of the system
\begin{eqnarray}\label{eq:condizione-generale}
\de\Biggl(
\beta_j+2\,\alpha\left(L^j\sspace,\,\ssspace\right)+4\,\sum_{k=1}^{j-1}\alpha\left(L^{j-k}\sspace,
\, L^k\ssspace\right)+2\,\alpha\left(\sspace,\,L^j\ssspace\right)\\[5pt]
\nonumber
+\sum_{h=1}^{j-1}\left(2\,\beta_h\left(L^{j-h}\sspace,\,\ssspace\right)+4\,\sum_{k=1}^{j-h-1}
\alpha\left(L^{j-h-k}\sspace,\, L^k\ssspace\right)+2\,\alpha\left(\sspace,\,L^{j-h}\ssspace\right)\right)\Biggr)
\;=\;0
\end{eqnarray}
varying $j\in\N\setminus\{0\}$. In particular, the first order obstruction reads
\begin{eqnarray}\label{eq:condizione-1}
\de\left(\beta_1+2\,\alpha(L\sspace,\,\ssspace)+2\,\alpha(\sspace,\,L\ssspace)\right) \;=\;0 \;.
\end{eqnarray}
\end{prop}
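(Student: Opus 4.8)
The plan is to convert both constraints on $\eta_t$ — being of type $(1,1)$ for $J_t$ and being $\de$-closed — into order-by-order conditions in $t$, after a convenient normalization of $L$. First I would use the remark in the statement to assume $L$ is $J$-anti-linear, $LJ+JL=0$; then $JL^m=(-1)^m L^m J$, and expanding $\left(\id-tL\right)^{-1}=\sum_{k\geq0}t^k L^k$ in \eqref{eq:Jt} yields
$$
J_t \;=\; J+2t\,JL\left(\id-tL\right)^{-1} \;=\; J+2\sum_{m\geq1}t^m\,JL^m \;.
$$
For each value of the parameter let $\pi^{+}_{J_t}\colon\wedge^2M\to\wedge^{1,1}_{J_t}M$ be the projector $\pi^{+}_{J_t}\beta=\tfrac12\bigl(\beta+\beta(J_t\sspace,J_t\sspace)\bigr)$; a real $2$-form is of type $(1,1)$ for $J_t$ precisely when it is fixed by $\pi^{+}_{J_t}$, and since $\alpha\in\wedge^{1,1}_J(M)$ one has $\pi^{+}_{J_t}\alpha=\alpha+\opiccolo1$.

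The reduction I would make is that a real $\de$-closed $2$-form $\eta_t=\alpha+\opiccolo1$ of type $(1,1)$ for $J_t$ exists if and only if one of the shape
$$
\eta_t \;:=\; \pi^{+}_{J_t}\Bigl(\alpha+\sum_{j\geq1}t^j\beta_j\Bigr)\;,\qquad \beta_j\in\wedge^2M \text{ real,}
$$
is $\de$-closed: any $2$-form of this shape is automatically real, of type $(1,1)$ for $J_t$, and equal to $\alpha+\opiccolo1$; conversely an admissible $\eta_t$ equals $\pi^{+}_{J_t}\eta_t$, so taking for the $\beta_j$ the coefficients of its Taylor expansion $\eta_t=\alpha+\sum_{j\geq1}t^j\beta_j$ exhibits it in this shape. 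Hence the whole problem becomes: determine, in terms of the $\beta_j$, when $\eta_t$ so defined is $\de$-closed.

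For this I would write $2\,\eta_t=\mu_t+\mu_t(J_t\sspace,J_t\sspace)$ with $\mu_t:=\alpha+\sum_{j\geq1}t^j\beta_j$ and expand $\mu_t(J_t\sspace,J_t\sspace)$ as a power series in $t$ by inserting the formula for $J_t$ into both slots: a factor $J$ landing on a slot of $\alpha$, or on the $(1,1)_J$-part of a $\beta_m$, is absorbed through the identities $\alpha(J\sspace,J\sspace)=\alpha$ and $\pi^{+}_J\beta_m(J\sspace,J\sspace)=\pi^{+}_J\beta_m$, while the surviving powers of $L$ remain. Collecting the coefficient of $t^j$ — the monomials carrying a single power $L^p$ on one slot coming from the index pairs $(p,0)$ and $(0,p)$, those carrying $L^p$ on one slot and $L^q$ on the other from the pairs with $p,q\geq1$ — one obtains exactly the $2$-form to which $\de$ is applied in \eqref{eq:condizione-generale}. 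Therefore $\de\eta_t=0$ is equivalent to the vanishing of $\de$ of that $2$-form for every $j\in\N\setminus\{0\}$, that is, to the system \eqref{eq:condizione-generale}; the case $j=1$, in which all the inner sums are empty, is precisely \eqref{eq:condizione-1}.

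I expect the main obstacle to be the last expansion: one must bookkeep with care which pairs of indices contribute which monomial in $L$ and with which multiplicity (this is the source of the coefficients $2$ and $4$ and of the double sums over $k$ and $h$ in \eqref{eq:condizione-generale}), and keep track of the iterated substitution of $J_t$ in the two arguments. By contrast the conceptual steps — the normalization $LJ+JL=0$ and the identification of the $J_t$-invariant perturbations of $\alpha$ with the forms $\pi^{+}_{J_t}$ of arbitrary $2$-forms — are short.
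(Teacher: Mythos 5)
Your proposal is correct and follows essentially the same route as the paper: the paper also expands $J_t=J+2\sum_{j\geq1}t^j\,J\,L^j$, sets $\alpha_t:=\alpha+\sum_{j\geq1}t^j\beta_j$ and $\eta_t:=\frac{\alpha_t+J_t\alpha_t}{2}$ (which is exactly your $\pi^{+}_{J_t}\alpha_t$), and identifies the coefficients of the power series of $\de\eta_t$ with the system \eqref{eq:condizione-generale}. Your explicit argument for the converse (an admissible $\eta_t$ is fixed by $\pi^{+}_{J_t}$, so its own Taylor coefficients furnish the $\beta_j$) is a welcome elaboration of the paper's terse ``the same computations prove the converse.''
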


\begin{proof}
Setting $J_t$ as in \eqref{eq:Jt} and expanding in series, one has
$$ J_t \;=\; J+\sum_{j\geq 1}2\,t^j\,J\,L^j $$
and then, for $\varphi\in\wedge^2M$, one computes
$$ J_t\,\varphi(\sspace,\,\ssspace) \;=\; J\,\varphi(\sspace,\,\ssspace)+2t\,J\,\left(\varphi(L\sspace,\,
\ssspace)+\varphi(\sspace,\,L\ssspace)\right)+\opiccolo{t} $$
and
$$ \de^c_{J_t}\,\varphi \;:=\; J_t^{-1}\,\de\,J_t \,\varphi\;=\; \de^c_{J}\,\varphi +2t\,J_t\,\de\,J\,
\left(\varphi(L\sspace,\,\ssspace)+\varphi(\sspace,\,L\ssspace)\right)+\opiccolo{t} \;. $$

Now, given $\alpha\in H^+_J(M)$, let $\left\{\beta_j\right\}_j$ be such that \eqref{eq:condizione-generale} holds; we put
$$ \alpha_t \;:=\; \alpha+\sum_{j\geq 1} t^j\,\beta_j $$
and
$$ \eta_t \;:=\; \frac{\alpha_t+J_t\,\alpha_t}{2} \;.$$
In this manner, we have that $\eta_t$ is a $J_t$-invariant real form such that $\eta_t=\alpha+\opiccolo{1}$.
We have to show that $\de\eta_t=0$. A straightforward computation yields
\begin{eqnarray*}
\de\eta_t &=& \sum_{j\geq1} t^j\, \de\Biggl(\beta_j+2\,\alpha\left(L^j\sspace,\,\ssspace\right)+4\,
\sum_{k=1}^{j-1}\alpha\left(L^{j-k}\sspace,\, L^k\ssspace\right)+2\,\alpha\left(\sspace,\,L^j\ssspace\right)\\[5pt]
&&+\sum_{h=1}^{j-1}\left(2\,\beta_h\left(L^{j-h}\sspace,\,\ssspace\right)+4\,\sum_{k=1}^{j-h-1}\alpha\left(L^{j-h-k}
\sspace,\, L^k\ssspace\right)+2\,\alpha\left(\sspace,\,L^{j-h}\ssspace\right)\right)
\Biggr)
\end{eqnarray*}
therefore $\de\eta_t=0$. The same computations prove the converse.
\end{proof}

\begin{rem}
 If $\de\, J_t \,=\, \pm J_t\,\de$ on $\wedge^2M$ for each $t$, then one could simply let
$$ \eta_t \;:=\; \frac{\alpha+J_t\,\alpha}{2} $$
so that $\eta_t\in\wedge^{1,1}_{J_t}$ and $\de\eta_t=0$. This is the case, for example, if all the $J_t$
are Abelian complex structures; \cite[Theorem 6]{poon} characterizes the $2$-step nilmanifolds all of
whose complex deformations are Abelian.
\end{rem}

\medskip

Finally, seeming that the result is not so useful in practice, we provide an example that employs it.

We will use the following argument, which says that, on a compact quotient of a simply-connected Lie group,
we could check the obstructions using only invariant forms.
\begin{lemma}\label{lemma:invariant}
 Let $M$ be a compact quotient of a simply-connected Lie group, so in particular $M$ is unimodular, see \cite{milnor}; if $\alpha\in\wedge^{1,1}_JM\cap \wedge^2M$ is an invariant $2$-form, if $J_t$ are invariant almost-complex structures and if we could find a solution $\left\{\beta_j\right\}_j$ to \eqref{eq:condizione-generale}, then we can find an invariant solution $\left\{\hat\beta_j\right\}_j$ to the same one.
\end{lemma}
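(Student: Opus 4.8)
The plan is to run the usual symmetrization (averaging) argument for compact quotients of Lie groups. Since $M=\Gamma\backslash G$ is compact and unimodular, $G$ is unimodular (Milnor, \cite{milnor}), hence it carries a bi-invariant Haar measure; the corresponding volume form is in particular left-invariant, so it descends to a volume form $d\mathrm{vol}$ on $M$, which I normalize by $\int_M d\mathrm{vol}=1$. Being also right-invariant on $G$, it satisfies $L_X\,d\mathrm{vol}=0$ for every left-invariant vector field $X$ on $G$, regarded as a vector field on $M$. Define the symmetrization
$$ \mu(\gamma)(X_1,\dots,X_k) \;:=\; \int_M \gamma\!\left(X_1,\dots,X_k\right)\, d\mathrm{vol}\,,\qquad X_1,\dots,X_k\in\mathfrak{g}\,, $$
where the $X_i$ are identified with the associated left-invariant vector fields; thus $\mu$ maps $\wedge^k M$ onto the space $\wedge^k\mathfrak{g}^*$ of invariant $k$-forms.

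I would then record three properties of $\mu$. \emph{(i)} $\mu$ is the identity on invariant forms, since such a form has constant components in a left-invariant coframe. \emph{(ii)} $\mu$ commutes with $\de$: writing $\de\gamma$ on left-invariant fields by the Koszul formula, the ``derivative'' terms $X_i\bigl(\gamma(\dots,\widehat{X_i},\dots)\bigr)$ integrate to zero over $M$, because $\int_M (L_{X_i}f)\,d\mathrm{vol}=\int_M \de\bigl(\iota_{X_i}(f\,d\mathrm{vol})\bigr)=0$ by Stokes once $L_{X_i}\,d\mathrm{vol}=0$ --- this is where unimodularity is used --- while the bracket terms pass through the integral; comparing with $\de(\mu\gamma)$, whose derivative terms vanish because $\mu\gamma$ is invariant, gives $\mu\circ\de=\de\circ\mu$. \emph{(iii)} For any invariant endomorphisms $A,B$ of $TM$ and any $2$-form $\gamma$ one has $\mu\bigl(\gamma(A\sspace,B\ssspace)\bigr)=(\mu\gamma)(A\sspace,B\ssspace)$, because $A,B$ preserve the space of left-invariant fields and the operation is fibrewise. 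The endomorphism $L$ is invariant, being determined pointwise and algebraically by the invariant data $J$ and $J_t$; hence so are all its powers $L^j$.

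The conclusion is then immediate. Put $\hat\beta_j:=\mu(\beta_j)$, a family of invariant real $2$-forms. For fixed $j\geq1$, the $j$-th equation of \eqref{eq:condizione-generale} reads $\de\,\Xi_j=0$, where $\Xi_j$ is the bracketed $2$-form assembled, via the operations of \emph{(iii)} and $\R$-linear combinations, from $\beta_1,\dots,\beta_j$ and $\alpha$. Applying $\mu$ and using \emph{(ii)}, then \emph{(iii)} and \emph{(i)} together with $\mu(\alpha)=\alpha$ (as $\alpha$ is invariant), we obtain $\de\,\mu(\Xi_j)=\mu(\de\,\Xi_j)=0$, and $\mu(\Xi_j)$ is precisely $\Xi_j$ with every $\beta_h$ replaced by $\hat\beta_h$ and $\alpha$ left unchanged. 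Thus $\{\hat\beta_j\}_j$ solves \eqref{eq:condizione-generale} through invariant forms.

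The only delicate point --- and the one I would treat most carefully --- is the construction of $\mu$ and the verification that it commutes with $\de$: since $G$ is typically non-compact (nilpotent or solvable in the examples), one cannot average over $G$ itself, and the average must be taken over the compact base $M$ by evaluating on left-invariant fields; it is exactly unimodularity, through $L_X\,d\mathrm{vol}=0$, that makes the derivative terms disappear after integration by parts. Everything else is a formal check.
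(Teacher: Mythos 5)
Your proof is correct and follows exactly the route the paper takes: the paper's proof consists of the single line $\hat\beta_j:=\int_M\beta_j\,\eta$ with $\eta$ a bi-invariant volume form, deferring to \cite{milnor} and \cite{belgun} for the properties of this symmetrization. What you have written out --- the averaging map $\mu$, its identity on invariant forms, its commutation with $\de$ via unimodularity, and its compatibility with invariant endomorphisms --- is precisely the content of the cited Belgun symmetrization, so your argument supplies the details the paper omits rather than a different approach.
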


\begin{proof}
It is enough to define
$$ \hat\beta_j \;:=\; \int_M \beta_j \,\eta $$
where $\eta$ is a bi-invariant volume form (see \cite{milnor}, \cite{belgun}).
\end{proof}

We can now give an example of a curve of compact almost-complex manifolds such that \eqref{eq:condizione-1} is not satisfied and, therefore, there is not a semi-continuity in the stronger sense described above.

\begin{ex}
 For suitable $c\in\R$, consider the solvmanifold
$$ N^6 \;:=:\; N^6(c) \;:=\; \frac{\mathrm{Sol}(3)\times\mathrm{Sol}(3)}{\Gamma(c)} $$
given in \cite{fernandez-munoz-santisteban} (see also \cite{benson-gordon}) as an example of a
cohomologically K\"ahler manifold without K\"ahler structures. (In the following, we consider $c=1$.) The structure equations of $N^6$ are
$$ \left( 12,\; 0,\; -36,\; 24,\; 56,\; 0 \right) \;.$$	
We look for a curve $\left\{J_t\right\}_t$ of almost-complex structures on $N^6$ and for a $J_0$-invariant form $\alpha$ that do not satisfy the obstruction \eqref{eq:condizione-1}: therefore, there will not be a $J_t$-invariant class close to $\alpha$. In other words, we ask for a direction $L$ along which we do not have the strong semi-continuity in the sense described above.\\
Consider the almost-complex structure
$$ J\;=\;
\left(
\begin{array}{c|c}
 \mathbf{0} & -\mathbf{1} \\
 \hline
 \mathbf{1} & \mathbf{0}
\end{array}
\right) \;;$$
let
$$ L\;=\;
\left(
\begin{array}{c|c}
 \mathbf{A} & \mathbf{B} \\
 \hline
 \mathbf{B} & -\mathbf{A}
\end{array}
\right) \;;$$
where
$$ \mathbf{A}\;=\; \left(a_{i}^{j}\right)_{i,j\in\{1,2,3\}}\;,\qquad\qquad\mathbf{B}\;=\;
\left(b_{i}^{j}\right)_{i,j\in\{1,2,3\}} $$
are constant matrices;
for
$$ \alpha \;=\; e^{14} $$
we have
\begin{eqnarray*}
\de\left(\alpha(L\sspace,\,\ssspace)+\alpha(\sspace,\,L\ssspace)\right) &=& b_1^3\, e^{123}+a_1^2\,
e^{125}-a_1^3\, e^{126}+b_1^3\, e^{136}-a_1^2\, e^{156}\\[5pt]
&&+a_1^3\, e^{234}-b_1^2\, e^{245}-b_1^3\, e^{246}+a_1^3\, e^{346}+b_1^2\, e^{456} \;.
\end{eqnarray*}
Then we choose
$$ L\;=\;
\left(
\begin{array}{ccc|ccc}
&&&&&b_1^3 \\
&&&&0& \\
&&&0&& \\
\hline
&&b_1^3&&& \\
&0&&&& \\
0&&&&&
\end{array}
\right) $$
with $b_1^3\neq0$ a constant in $\R$.\\
We need now to find a $2$-form $\beta$ such that
\begin{equation}\label{eq:ex}
\de \beta \;=\; b_1^3\, e^{123}+b_1^3\,e^{136}-b_1^3\,e^{246}\;;
\end{equation}
it is straightforward to check that no invariant $\beta$ satisfying \eqref{eq:ex} could exist;
therefore, by Lemma \ref{lemma:invariant}, also no non-invariant such $\beta$ could exist.
\end{ex}

We resume the content of the last example in the following.

\begin{prop}\label{prop:scs-forte}
 There exist a compact \Cpf\ almost-complex manifold $(M,J_0)$ and a curve $\{J_t\}_t$ of
almost-complex structures on it such that, for every $t$ small enough, there exists no $J_t$-invariant class close to
every fixed $J_0$-invariant one.
\end{prop}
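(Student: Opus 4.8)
The plan is to take exactly the manifold and data of the Example above: set $M:=N^6=N^6(1)$, let $J_0:=J$ be the invariant almost-complex structure with the block form displayed there, let $\{J_t\}_t$ be the curve \eqref{eq:Jt} attached to the endomorphism $L$ (with $b_1^3\neq0$), and put $\alpha:=e^{14}$. The obstruction computation has already been carried out in that Example; what the statement additionally requires, and what I would still have to establish, is that $(N^6,J_0)$ is \Cpf\ and that $[\alpha]$ is a nonzero class in $H^+_{J_0}(N^6)$.

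\emph{Step 1: $(N^6,J_0)$ is \Cpf.} Since $\mathrm{Sol}(3)\times\mathrm{Sol}(3)$ is completely solvable, I would use Hattori's theorem (see \cite{hattori}) to compute $H^\bullet_{dR}(N^6;\R)$ from the Chevalley--Eilenberg complex of the Lie algebra $\mathfrak g$ with structure equations $(12,\,0,\,-36,\,24,\,56,\,0)$; this produces $H^2_{dR}(N^6;\R)$ together with a basis of invariant representatives. For an invariant $2$-form the decomposition into its $J_0$-invariant and $J_0$-anti-invariant parts is just the type splitting $\wedge^2\mathfrak g^{\ast}=\wedge^{1,1}\oplus(\wedge^{2,0}\oplus\wedge^{0,2})$ read off the coframe of $(1,0)$-forms $\varphi^1=e^1+\im\,e^4,\ \varphi^2=e^2+\im\,e^5,\ \varphi^3=e^3+\im\,e^6$; checking on the explicit basis that both components of a closed invariant representative remain closed gives fullness, $H^2_{dR}(N^6;\R)=H^+_{J_0}(N^6)+H^-_{J_0}(N^6)$, while purity $H^+_{J_0}(N^6)\cap H^-_{J_0}(N^6)=\{[0]\}$ is immediate from the type grading; hence $J_0$ is \Cpf. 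Finally $e^{14}$ is $J_0$-invariant (from the block form of $J_0$) and $\de$-closed (from the structure equations, $\de e^{14}=e^{124}-e^{124}=0$), and the computation above shows $[e^{14}]\neq0$, so $[\alpha]\in H^+_{J_0}(N^6)$ is nonzero.

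\emph{Step 2: the first-order obstruction fails.} For the chosen $L$ the Example computed $\de\!\left(\alpha(L\sspace,\ssspace)+\alpha(\sspace,L\ssspace)\right)=b_1^3\,(e^{123}+e^{136}-e^{246})$. By the Proposition establishing \eqref{eq:condizione-generale}, a $\de$-closed real $J_t$-invariant $2$-form of the shape $\alpha+\opiccolo{1}$ can exist only if the system \eqref{eq:condizione-generale} is solvable, and in particular only if its first-order instance \eqref{eq:condizione-1} is; up to the nonzero constant $b_1^3$ this is exactly the requirement \eqref{eq:ex} that $e^{123}+e^{136}-e^{246}$ be $\de$-exact. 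By Lemma \ref{lemma:invariant} it is enough to rule out an \emph{invariant} primitive, and a direct computation of $\de$ on $\wedge^2\mathfrak g^{\ast}$ shows $e^{123}+e^{136}-e^{246}\notin\de(\wedge^2\mathfrak g^{\ast})$. Hence \eqref{eq:condizione-1} has no solution, so for $t\neq0$ small there is no $\de$-closed real $J_t$-invariant $2$-form $\eta_t=\alpha+\opiccolo{1}$; equivalently, no $J_t$-invariant cohomology class lies close to $[e^{14}]\in H^+_{J_0}(N^6)$. Together with the fact, from Step 1, that $J_0$ is \Cpf, this proves the statement.

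\emph{Main obstacle.} The only non-formal part is Step 1: one must actually run the Chevalley--Eilenberg computation of $H^2_{dR}(N^6;\R)$ and verify that the $(1,1)$- and $(2,0)+(0,2)$-components of a closed invariant representative stay closed, which is what makes fullness work (purity is then automatic). The remaining items --- closedness of $e^{14}$, the bracket computation $\de(\alpha(L\sspace,\ssspace)+\alpha(\sspace,L\ssspace))$, and the non-exactness of $e^{123}+e^{136}-e^{246}$ among invariant forms --- are routine, with signs and the precise normalization of $J_0$ on the coframe the only things to watch; Lemma \ref{lemma:invariant} is what licenses reducing the last point to invariant forms.
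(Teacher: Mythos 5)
Your proposal reproduces exactly the paper's own argument: the paper proves Proposition \ref{prop:scs-forte} precisely by the preceding Example on $N^6=\mathrm{Sol}(3)\times\mathrm{Sol}(3)/\Gamma(1)$, taking $\alpha=e^{14}$, the stated $L$ with $b_1^3\neq0$, computing the first-order obstruction \eqref{eq:condizione-1}, and invoking Lemma \ref{lemma:invariant} to reduce non-exactness of $e^{123}+e^{136}-e^{246}$ to invariant forms. Your Step 1 additionally spells out the verification (via Hattori's theorem) that $J_0$ is \Cpf\ and that $[e^{14}]\neq0$, which the paper leaves implicit; this is a correct and welcome completion rather than a different route.
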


\bigskip

\noindent{\sl Acknowledgments.} The authors would like to thank Tedi Dr\v{a}ghici, Tian-Jun Li and Weiyi Zhang for their very useful comments, suggestions and remarks. Also many thanks to the anonymous referee and to L\'aszl\'o Lempert for their valuable comments and remarks which improved the presentation of this paper.

\end{document}